\newtheorem{theorem}{Theorem}[section]
\newtheorem{corollary}[theorem]{Corollary}
\newtheorem{proposition}[theorem]{Proposition}
\newtheorem{lemma}[theorem]{Lemma}
\theoremstyle{definition}
\theoremstyle{property}
\newcommand{\contraction}[5][1ex]{%
  \mathchoice
    {\contraction@\displaystyle{#2}{#3}{#4}{#5}{#1}}%
    {\contraction@\textstyle{#2}{#3}{#4}{#5}{#1}}%
    {\contraction@\scriptstyle{#2}{#3}{#4}{#5}{#1}}%
    {\contraction@\scriptscriptstyle{#2}{#3}{#4}{#5}{#1}}}%
\newcommand{\contraction@}[6]{%
  \setbox0=\hbox{$#1#2$}%
  \setbox2=\hbox{$#1#3$}%
  \setbox4=\hbox{$#1#4$}%
  \setbox6=\hbox{$#1#5$}%
  \dimen0=\wd2%
  \advance\dimen0 by \wd6%
  \divide\dimen0 by 2%
  \advance\dimen0 by \wd4%
  \vbox{%
    \hbox to 0pt{%
      \kern \wd0%
      \kern 0.5\wd2%
      \contraction@@{\dimen0}{#6}%
      \hss}%
    \vskip 0.5ex
    \vskip\ht2}}
\newcommand{\contraction@@}[3][0.05em]{%
  \hbox{%
    \vrule width #1 height 0pt depth #3%
    \vrule width #2 height 0pt depth #1%
    \vrule width #1 height 0pt depth #3%
    \relax}}
\DeclareFontFamily{OT1}{rsfs}{}
\DeclareFontShape{OT1}{rsfs}{n}{it}{<-> rsfs10}{}
\DeclareMathAlphabet{\curly}{OT1}{rsfs}{n}{it}
\newcommand\I{\curly I}
\renewcommand\L{\mathcal L}
\renewcommand\O{\mathcal O}
\newcommand\PP{\mathbb P}
\newcommand\cP{\curly P}
\newcommand\Pb{\mathcal P_\beta}
\newcommand\D{\mathcal D}
\newcommand\C{\mathbb C}
\newcommand\II{\mathbb I}
\newcommand\Pg{\mathcal P_\gamma}
\newcommand\R{\mathbb R}
\newcommand\Z{\mathbb Z}
\newcommand{\Rt}[1]{\stackrel{#1\,}{\longrightarrow}}
\newcommand\into{\hookrightarrow}
\newcommand\Into{\ar@{^(->}[r]<-.3ex>}
\newcommand\bull{{\scriptscriptstyle\bullet}}
\newcommand\udot{^\bull}
\newcommand\rk{\operatorname{rank}}
\newcommand\Td{\operatorname{Td}}
\renewcommand\div{\operatorname{div}}
\newcommand\id{\operatorname{id}}
\renewcommand\hom{\curly H\!om}
\newcommand\Pic{\operatorname{Pic}}
\newcommand\Hilb{\operatorname{Hilb}}
\newcommand\beq[1]{\begin{equation}\label{#1}}
\newcommand\eeq{\end{equation}}
\newcommand\beqa{\begin{eqnarray*}}
\newcommand\eeqa{\end{eqnarray*}}
\DeclareRobustCommand{\SkipTocEntry}[4]{}
\begin{document}
\title[Curve counting on surfaces II: calculations]{Reduced classes and curve counting on surfaces II: calculations}
\author[M. Kool and R. P. Thomas]{Martijn Kool and Richard Thomas \vspace{-5mm}}
\maketitle

\begin{abstract}
We calculate the stable pair theory of a projective surface $S$. For fixed curve class $\beta\in H^2(S)$ the results are entirely topological, depending on $\beta^2$, $\beta.c_1(S)$, $c_1(S)^2$, $c_2(S)$, $b_1(S)$ \emph{and} invariants of the ring structure on $H^*(S)$ such as the Pfaffian of $\beta$ considered as an element of $\Lambda^2 H^1(S)^*$. Amongst other things, this proves an extension of the G\"ottsche conjecture to non-ample linear systems.

We also give conditions under which this calculates the full 3-fold reduced residue theory of $K_S$. This is related to the reduced residue Gromov-Witten theory of $S$ via the MNOP conjecture.
When the surface has no holomorphic 2-forms this can be expressed as saying that certain Gromov-Witten invariants of $S$ are topological.

Our method uses the results of \cite{KT1} to express the reduced virtual cycle in terms of Euler classes of bundles over a natural smooth ambient space.
\end{abstract}
\thispagestyle{empty}
\renewcommand\contentsname{\vspace{-8mm}}
\tableofcontents

\section{Introduction}

Fix a nonsingular projective surface $S$ and a homology class $\beta \in H_{2}(S,\Z)$ of Hodge type $(1,1)$. Whenever there exists a deformation of $S$ for which $\beta$ is no longer $(1,1)$, the conventional Gromov-Witten invariants \cite{Beh, BF, LT} of $S$ and of the Calabi-Yau 3-fold $X:=K_S$ vanish by deformation invariance. Similarly for the stable pair invariants of $X$ \cite{PT1}. By removing part of the obstruction bundle one can define ``\emph{reduced}" invariants which are only invariant under deformations of $S$ in the Noether-Lefschetz locus (the locus where $\beta$ is of type $(1,1)$). Various authors have studied this in various contexts; see the introduction to \cite{KT1} for references. In \cite{KT1} we defined such reduced Gromov-Witten and stable pair invariants under the condition that
\begin{equation} \label{cup}
H^{1}(T_{S}) \stackrel{\cup \beta}{\longrightarrow} H^{2}(\mathcal{O}_{S}) \mathrm{ \ is \ surjective}.
\end{equation} 
Here we consider $\beta$ to lie in $H^{1}(\Omega_{S})$, so the map is induced by the pairing $\Omega_{S} \otimes T_{S} \to \mathcal{O}_{S}$. When $h^{2,0}(S) = 0$ the reduced invariants coincide with the ordinary invariants.

In this paper we work with the stable pair theory. As in \cite{KT1} one can then work under the weaker condition that 
\begin{equation} \label{h2=0}
H^{2}(L) = 0 \mathrm{ \ for \ all \ line \ bundles \ } L \mathrm{ \ with \ } c_{1}(L) = \beta.
\end{equation}
The natural $\mathbb{C}^{*}$-action on the fibres of $X=K_S$ lifts to an action on the moduli space of stable pairs on $X$. One of the connected components of the fixed point locus is the moduli space of stable pairs on the surface
$S$. (There can be other components containing stable pairs supported set-theoretically but not scheme-theoretically on $S$.) By $\C^*$-localisation we get a reduced obstruction theory on the moduli space of stable pairs on $S$. In \cite[Appendix A]{KT1}, written with D.~Panov, we are able to identify this reduced obstruction theory with the one that arises naturally in a completely different way. Namely, we take the zero locus of a section of a bundle over a natural smooth ambient space, then a section of another bundle over this zero locus cuts out the moduli space.


This allows us to calculate the (reduced, residue) stable pair invariants of $S$ in terms of integrals over the smooth ambient space against the Euler classes of the two bundles.

While it is a general principle that stable pairs are easier to calculate with than stable maps, we know of almost no other moduli problem where such direct calculation is possible.\footnote{The genus zero Gromov-Witten theory of complete intersections in convex varieties is perhaps the only other case.} Usually obtaining explicit results is very complicated, involving various difficult degeneration and localisation tricks.

In forthcoming work \cite{PTKKV} this calculation provides one of the foundations of a computation of the full stable pairs theory of the twistor family of a K3 surface. Via Pandharipande and Pixton's recent proof of the MNOP conjecture for many 3-folds \cite{PaPi}, this then gives a proof of the famous KKV formula for the Gromov-Witten invariants of K3 surfaces in all genera, degrees and for all multiple covers. \bigskip

We split the calculation up into two cases.
In the first we simplify things by using $H_1$-insertions \cite{BL, KT1} to cut the moduli space down to curves living in a single linear system $|L|$, where $c_1(L)=\beta$. When $L$ is sufficiently ample the moduli space is smooth, the reduced obstruction bundle vanishes and the expressions reduce to the intersection numbers encountered in \cite{KST}. In \cite{KST} some of these intersection numbers were related to counts of nodal curves on $S$ and used to prove the G\"ottsche conjecture. Here we work with arbitrary $L$ satisfying Condition \eqref{h2=0}, where the invariants with $H_1$-insertions include an extension of G\"ottsche's invariants to the non-ample case \cite[Section~5]{KT1}. We show that, just as in the G\"ottsche case, the invariants only depend on the four topological numbers $\beta^{2}$, $\beta.c_{1}(S)$, $c_{1}(S)^{2}$, $c_{2}(S)$.

Let $h$ denote the arithmetic genus of curves in class $\beta$ as given by the adjunction formula
\beq{genus}
2h-2=\beta^2-c_1(S).\beta.
\eeq
\begin{theorem} \label{1}
Fix $\beta$ satisfying Condition \eqref{h2=0}. The reduced residue
invariant\footnote{This is the surface part of the $\C^*$-equivariant stable pair invariant of $X$. Up to the power of the equivariant cohomology parameter $t$, and saying ``virtual" at the appropriate places, it works out to be the following. Integrate the Chern class of the cotangent bundle of the moduli space of pairs over the subspace of pairs whose underlying curves live in $|L|$ and pass through $m$ fixed generic points of $S$. The $\gamma_i$ form an integral oriented basis of $H_1(S)$/torsion; their insertion cuts $H_\beta$ down to a single linear system $|L|$.\label{ftn}}
$\cP_{\!1-h+n,\beta}^{red}(S, [\gamma_{1}] \ldots [\gamma_{b_{1}(S)}]
[pt]^{m})$ $\in\Z(t)$ of \cite[Section 3.2]{KT1} is the product of
$t^{m+h^{0,1}(S)-h^{0,2}(S)}$ and a universal function of the variables
\beq{6nos}
n,\,m,\,\beta^{2},\ \beta.c_{1}(S),\ c_{1}(S)^{2},\ c_{2}(S).
\eeq
For fixed $n,m$ and $\chi(L)=\chi(\O_S)+\frac12(\beta^2+\beta.c_1(S))$ it is $(-1)^{\chi(L)-1-m+n}$ times by a polynomial in the 4 topological numbers $\beta^{2},\,\beta.c_{1}(S),\,c_{1}(S)^{2}$ and $c_{2}(S)$.
\end{theorem}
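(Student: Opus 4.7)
The plan is to exploit the ambient-space description of \cite[Appendix A]{KT1}. After cutting down by the $H_1$-insertions and the $m$ point constraints, the reduced residue invariant is expressed as an equivariant integral on a smooth ambient space $A$,
\[
\cP^{red}_{1-h+n,\beta} \;=\; \int_A c(\Omega_A)\cdot e_{\C^*}(\cE_1)\cdot e_{\C^*}(\cE_2)\, ,
\]
where a section of $\cE_1$ cuts the moduli space of stable pairs on $S$ out of $A$, and a section of $\cE_2$ then cuts down to the reduced cycle. The $\C^*$-action on the fibres of $K_S$ scales $\cE_2$ and makes $e_{\C^*}(\cE_2)$ a pure weight class. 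Tracking these weights against the equivariant virtual dimension produces the universal power $t^{m+h^{0,1}(S)-h^{0,2}(S)}$ in front of the residue.

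Next I would identify $A$ concretely as a tower of projective bundles over the relative Hilbert scheme $\mathrm{Hilb}^n(\cC/B)$, where $B\subset|L|$ is the subsystem of curves through the $m$ fixed generic points and $\cC\to B$ is the universal curve. Applying Grothendieck--Riemann--Roch to $\cC$ and to the universal subscheme $\cZ\subset\cC\times_B A$, the Chern classes of $T_A$, $\cE_1$ and $\cE_2$ become universal polynomial expressions in tautological classes on the Hilbert scheme together with classes pulled back from $S$. Crucially the only classes on $S$ that appear are Chern classes of $T_S$ and of the tautological line bundle; hence after pushing forward to a point, the only intersection numbers on $S$ that can survive are $\beta^2$, $\beta\cdot c_1(S)$, $c_1(S)^2$ and $c_2(S)$. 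No finer aspect of the ring structure of $H^*(S)$ is probed. This gives the universality statement for the six variables $n,m,\beta^2,\beta\cdot c_1(S),c_1(S)^2,c_2(S)$.

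For the polynomial claim, fixing $n$, $m$ and $\chi(L)$ fixes $\beta^2+\beta\cdot c_1(S)$; the dimension of $A$ and the ranks of $\cE_1,\cE_2$ then become linear functions of the four topological numbers. The G\"ottsche-style universal expressions for Chern characters on Hilbert schemes, together with the Segre-class formulas for successive projective-bundle push-forwards, are polynomial in these ranks. After integration one therefore obtains a polynomial in $\beta^2,\beta\cdot c_1(S),c_1(S)^2,c_2(S)$. The overall sign $(-1)^{\chi(L)-1-m+n}$ is tracked by observing that $c(\Omega_A)=\sum_k(-1)^kc_k(T_A)$ and that only the component of complementary degree to $e_{\C^*}(\cE_1)e_{\C^*}(\cE_2)$ contributes; its parity is governed by $\dim_\C A$, which one computes to be congruent to $\chi(L)-1-m+n$ modulo $2$.

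The main obstacle is the universality verification itself. Concretely, one must check that every Chern character produced by the Grothendieck--Riemann--Roch expansions depends on the cohomology of $S$ only through classes pulled back via $\beta$ and $c(T_S)$, and that after integration over $A$ no monomial involving, say, a cup product of two $H^1$-classes with $\beta$ survives unless it can be repackaged into one of the four permitted pairings. This is the stable-pairs analogue of G\"ottsche's original universality argument for tautological integrals on Hilbert schemes of points, and reduces in the end to a careful bookkeeping exercise on the tower of projective bundles which builds $A$.
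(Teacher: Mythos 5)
Your overall strategy (reduce to Euler-class integrals on a smooth ambient space, then invoke EGL-type universality of tautological integrals) is in the right spirit, but two of your key structural claims do not match what is actually true, and they are not repairable as stated. First, your integrand is wrong: the residue invariant is not $\int_A c(\Omega_A)\,e(\cE_1)\,e(\cE_2)$. The factor coming from localisation is $1/e(N^{vir})$, where $[N^{vir}]$ has the explicit K-theory expression \eqref{K1}; after the identity \eqref{chernt} this contributes $c_{-1/t}(T_{S^{[n]}})\,c_{-1/t}(\O(1)^{\oplus\chi(L)})\big/c_{-1/t}(L^{[n]}(1))$ together with a trivial $R\Gamma(\O_S)$ factor, \emph{not} the total Chern class of the cotangent bundle of the ambient space (the footnote's phrase refers to the virtual cotangent bundle of the \emph{moduli space}, which differs from $\Omega_A$ exactly by the duals of the two cutting bundles and the $R\Gamma(\O_S)$ term). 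These correction terms are precisely what produce the prefactor $t^{m+h^{0,1}-h^{0,2}}$ and the sign $(-1)^{\chi(L)-1-m+n}$; your attributions of the $t$-power to ``$e_{\C^*}(\cE_2)$ being a pure weight class'' and of the sign to the parity of $\dim_\C A$ are incorrect (the Euler classes $c_r(F)$ and $c_n(\O(\D-A)^{[n]})$ enter non-equivariantly via Theorem \ref{main}, and the relevant exponent is the complementary degree $n+\chi(L)-1-m$, which is $\dim_\C A$ minus $r+n+m$, not $\dim_\C A$).

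Second, your ambient space is misidentified: you propose a tower of projective bundles \emph{over} the relative Hilbert scheme $\Hilb^n(\cC/B)$, but that relative Hilbert scheme \emph{is} the (generally singular) moduli space being cut out; one cannot fibre a smooth ambient space over it, and no GRR computation on a universal curve over $B\subset|L|$ is needed. The correct ambient space is $S^{[n]}\times|L(A)|\subset S^{[n]}\times H_\gamma$, with the moduli space cut out by sections of $F$ and $\O(\D-A)^{[n]}$; on $|L(A)|$ one has $F\cong\O(1)^{\oplus r}$, so $c_r(F)=h^r$, the projective space integrates out, and one is left with a tautological integral over $S^{[n]}$ in $c_\bull(L^{[n]})$ and $c_\bull(T_{S^{[n]}})$ to which the EGL recursion applies. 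Finally, you also skip the step that justifies passing from $P_\chi(S,\beta)$ to the single linear system: in the paper this is the refined Gysin map $j^!$ over $\Pic_\beta(S)$, the flatness argument giving $j^!=\jmath^!$, and compatibility with pushforward, which is what legitimises restricting Theorem \ref{main} to $S^{[n]}\times|L(A)|$. Without these three ingredients the reduction to a universal polynomial in $\beta^2$, $\beta.c_1(S)$, $c_1(S)^2$, $c_2(S)$ is asserted rather than proved.
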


In the second case we work with stable pairs over the full Hilbert scheme $\mathrm{Hilb}_{\beta}(S)$ of curves on $S$ with class $\beta$, with no $H_1$-insertions. This differs from the first case when the dimension $h^{0,1}(S)$ of the Picard variety $\Pic_{\beta}(S)$ of $S$ is positive. This time one also has to perform integrals over $\Pic_{\beta}(S)$. The resulting invariants are again topological, depending not only on the topological numbers \eqref{6nos} but also on numerical invariants of the ring structure of $H^*(S)$ described as follows. 

Via wedging and integration over $S$ the classes $\beta,\,c_{1}(S) \in H^{2}(S,\Z)$ and $1\in H^0(S,\Z)$ give rise to elements
\beq{pfa}
[\beta],\ [c_{1}(S)] \in \Lambda^{2}H^{1}(S,\Z)^{*}\quad\mathrm{and}\quad
[1] \in \Lambda^{4} H^{1}(S,\Z)^{*}.
\eeq
Wedging together combinations of these we can get elements of $\Lambda^{\!b_{1}(S)} H^{1}(S,\Z)^{*}$:
\beq{numbs}
\Lambda^{i} [\beta] \wedge \Lambda^{j} [c_{1}(S)] \wedge \Lambda^{k} [1] \quad\mathrm{where\ \ } 2i+2j+4k = b_{1}(S).
\eeq
There is a canonical isomorphism $\Lambda^{\!b_{1}(S)} H^{1}(S,\Z)^{*} \cong \Z$ given by evaluating on the wedge of any integral basis\footnote{Note that $H^1(S,\Z)$ is torsion-free.} of $H^{1}(S,\Z)$ which is compatible in $H^1(S,\R)$ with the orientation provided by the complex structure. Therefore we can regard the $\Lambda^{i} [\beta] \wedge \Lambda^{j} [c_{1}(S)] \wedge \Lambda^{k} [1]$ as integers.

\begin{theorem} \label{2}
Fix $\beta$ satisfying \eqref{h2=0}. 
The reduced residue stable pair invariant\footnote{This invariant should be interpreted as in footnote \ref{ftn}, except the curve passing through $m$ fixed points is no longer constrained to lie in $|L|$.} $\cP_{\!1-h+n,\beta}^{red}(S, [pt]^{m})\in\Z(t)$ of \cite[Section 3.2]{KT1}
is equal to $t^{m-h^{0,2}(S)}$ times by a universal function of
\begin{equation*}
m,\,n,\,b_1(S),\,\beta^{2},\,\beta.c_{1}(S),\,c_{1}(S)^{2},\,c_{2}(S),\,\big\{\Lambda^i [\beta] \wedge \Lambda^{j} [c_{1}(S)] \wedge \Lambda^{k} [1]\big\}_{2i+2j+4k=b_1(S)}.
\end{equation*}
For fixed $n,m,b_1(S)$ and $\chi(L)=\chi(\O_S)+\frac12(\beta^2+\beta.c_1(S))$ it is the product of $(-1)^{\chi(L)-1-m+n+h^{0,1}(S)}$ and a universal polynomial in the topological numbers $\beta^{2},\,\beta.c_{1}(S),
\,c_{1}(S)^{2},\,c_{2}(S)$ and $\Lambda^i [\beta] \wedge \Lambda^{j} [c_{1}(S)] \wedge \Lambda^{k} [1]$.
\end{theorem}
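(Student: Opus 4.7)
The plan is to reduce $\cP^{red}_{1-h+n,\beta}(S,[pt]^m)$ to an integral of tautological Chern classes over a smooth ambient space, using the description provided by \cite[Appendix A]{KT1}. There the moduli space of pairs is identified as the zero locus of a section of a bundle on a smooth ambient space $\mathcal A$, further cut by a section of a second bundle; up to the Euler classes of these two bundles the reduced virtual cycle is represented by $[\mathcal A]$, and the point insertions and $t$-powers become explicit cohomology classes. So the invariant reduces to an integral over $\mathcal A$ of an explicit tautological class.

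The ambient space $\mathcal A$ is built from a Poincar\'e line bundle $\mathcal L$ on $S\times\Pic_\beta(S)$ together with a relative-Hilbert-scheme-of-points construction recording the length of the section's cokernel, and carries a natural fibration $\pi\colon\mathcal A\to\Pic_\beta(S)$ whose fibres are precisely the ambient spaces used in the proof of Theorem \ref{1}. The first step is to push forward along $\pi$: over $L\in\Pic_\beta(S)$ the fibre integral is exactly the Theorem \ref{1} computation, giving a universal polynomial in $\beta^2,\beta.c_1(S),c_1(S)^2,c_2(S)$ with coefficients depending only on $n,m,\chi(L)$.

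The second step is to identify the remaining class on $\Pic_\beta(S)$. All bundles appearing on $\mathcal A$ arise by push-pull from $\mathcal L$, $K_S$ and $\O_S$ on $S\times\Pic_\beta(S)$, so Grothendieck-Riemann-Roch expresses their Chern characters as polynomials in the K\"unneth components of $c_1(\mathcal L)$, $c_1(S)$ and $1$. Normalising $\mathcal L$ suitably, $c_1(\mathcal L)=\beta+\theta$ where $\theta\in H^1(S,\Z)\otimes H^1(\Pic_\beta(S),\Z)$ is the canonical class; therefore the classes on $\Pic_\beta(S)$ surviving $S$-integration are polynomial expressions in $\beta$, $c_1(S)$, $1$ and powers of $\theta$, and their top-degree integrals against $[\Pic_\beta(S)]$ produce exactly the numerical invariants $\Lambda^i[\beta]\wedge\Lambda^j[c_1(S)]\wedge\Lambda^k[1]$ of \eqref{numbs} with $2i+2j+4k=b_1(S)$. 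The sign $(-1)^{\chi(L)-1-m+n+h^{0,1}(S)}$ and the factor $t^{m-h^{0,2}(S)}$ follow by combining the prefactors of Theorem \ref{1} with an orientation and rank shift for $\Pic_\beta(S)$, trading the $t^{h^{0,1}(S)}$ that the $H_1$-insertions contributed for a sign $(-1)^{h^{0,1}(S)}$.

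The main obstacle will be carrying out this K\"unneth and Grothendieck-Riemann-Roch bookkeeping cleanly. One must verify that no finer information about $H^\bull(S)$ than the three classes $\beta$, $c_1(S)$ and $1$ enters the push-forward, and that integration over $\Pic_\beta(S)$ really does extract only the wedge invariants of \eqref{numbs}. Since the tautological bundles on $\mathcal A$ are built entirely from $\mathcal L$, $K_S$ and $\O_S$, this amounts to showing that the only $H^1(S,\Z)$-contribution entering the computation is via the canonical class $\theta$, and that wedging $\theta$-powers with the basis evaluation $\Lambda^{b_1(S)} H^1(S,\Z)^*\cong\Z$ produces precisely the stated numerical invariants. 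Once this is established, both the universality and the polynomiality in the stated variables follow.
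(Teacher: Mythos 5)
Your overall plan (reduce to a tautological integral over the smooth ambient space $S^{[n]}\times H_\gamma$ via Theorem \ref{main} and the K-theoretic formula \eqref{K2} for $N^{vir}$, push everything down to $\Pic_\beta(S)$, then use Grothendieck--Riemann--Roch and the K\"unneth decomposition of $c_1$ of a normalised Poincar\'e bundle to land on the wedge invariants of \eqref{numbs}) is the same as the paper's. But your first step contains a genuine gap. The pushforward along $\pi\colon S^{[n]}\times H_\gamma\to\Pic_\beta(S)$ is a cohomology class of \emph{mixed} degree on $\Pic_\beta(S)$, and only its top-degree component contributes to the invariant; the fibrewise integral over $S^{[n]}\times|L(A)|$ that you invoke computes only the degree-zero component of this pushforward. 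When $b_1(S)>0$ that component contributes nothing to $\int_{\Pic_\beta(S)}$, and it is blind to the classes $\theta$ you need in your second step. So the claim that ``over $L$ the fibre integral is exactly the Theorem \ref{1} computation, giving a universal polynomial in $\beta^2,\beta.c_1(S),c_1(S)^2,c_2(S)$ with coefficients depending only on $n,m,\chi(L)$'' cannot be a step of the argument: if the pushforward really were a number fibrewise, the answer would be independent of the $\Lambda^i[\beta]\wedge\Lambda^j[c_1(S)]\wedge\Lambda^k[1]$, contradicting the very statement being proved, and indeed it would vanish against the positive-dimensional Picard variety. Your second step then refers to ``the remaining class on $\Pic_\beta(S)$'', but nothing in your first step produces such a class.

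What is missing is the actual mechanism for computing the full pushforward with coefficients in $H^*(\Pic_\beta(S))$. In the paper this is done in three steps that your proposal either omits or glosses over: (a) the identification of the point insertion $\tau([pt])$ with the relative hyperplane class $h=[D_x]$ of the projective bundle $H_\gamma=\PP(p_*\Pg)$, which requires the normalisation \eqref{normal} of the Poincar\'e bundle at the chosen point; (b) the projective-bundle pushdown identity $AJ_*(c_r(F)h^j)=s_{j-\chi(L)+1}\big(Rp_*\Pg(-A)\big)$, which converts the $F$-factor and the powers of $h$ into Segre classes of $Rp_*\Pb$ living on the base (and, incidentally, makes the answer manifestly independent of $A$); and (c) a family version of the EGL recursion over $B=\Pic_\beta(S)$ to integrate out the $S^{[n]}$ directions while retaining the base classes coming from $c_1(\Pb)$. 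Your GRR/K\"unneth endgame, with $c_1(\Pb)=\beta+\theta$ and the evaluation of the resulting $\Pic_\beta(S)$-integrals as the numbers $\Lambda^i[\beta]\wedge\Lambda^j[c_1(S)]\wedge\Lambda^k[1]$, is exactly the paper's final step and is fine, but it has nothing to act on until a computation along the lines of (a)--(c) actually produces the pushforward class. The sign and $t$-power bookkeeping you describe is plausible but likewise needs to be extracted from that computation rather than by analogy with Theorem \ref{1}.
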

 
In Section \ref{3f} we give some conditions under which the moduli space of stable pairs on $S$ is the whole fixed point locus of the moduli space of stable pairs on $X=K_S$. The most obvious case is when $\beta$ is an irreducible class. Another is when $K_{S}^{-1}$ is nef, $\beta$ is $(2\delta+1)$-very ample\footnote{By this we mean there exists a line bundle $L$ in $\Pic_\beta(S)$ which is $(2\delta+1)$-very ample. Recall \cite{BS} that this means that $H^0(L)\to H^0(L|_Z)$ is surjective for every length $2\delta+2$ subscheme $Z$ of $S$.} and the number of free points of the stable pairs is $\leq \delta$. A third example is provided by using only moduli spaces cut down by many point insertions (see also \cite[Section~5]{KT1}).

So in these cases we compute the corresponding reduced stable pair invariants of $X$, not just $S$. By the MNOP conjecture \cite{MNOP} (proved in the toric case \cite{MOOP, MPT}, the ``G\"ottsche case'' \cite{KT1}, and now for ``most" compact Calabi-Yau 3-folds \cite{PaPi}) this determines various reduced $\C^*$-equivariant GW\footnote{These are given by reduced GW invariants of $S$ with $K_S$-twisted $\lambda$-classes.} and DT invariants of $X$, which are therefore also topological. Note that in the toric case $h^{0,2}(S)=0$ so these are the usual GW/DT invariants.
\smallskip

\noindent \textbf{Acknowledgements.} We would like to thank Daniel Huybrechts, Vivek Shende and Rahul Pandharipande for useful discussions. Both authors were supported by EPSRC programme grant number EP/G06170X/1.

\section{The moduli space as a zero locus}

We fix some notation. Let $S$ be a nonsingular projective surface with cohomology class $\beta \in H^{2}(S,\Z)$. In this paper $L$ always denotes a line bundle with $c_1(L)=\beta$ and $h$ is the arithmetic genus (\ref{genus}) of curves in class $\beta$.


For stable pairs on $S$ in class $\beta$ (or on $X:=K_S$ in class $\iota_*\beta$) with holomorphic Euler characteristic $$\chi=1-h+n$$ we refer to \cite{KT1, PT3}. In this paper we need only their description as pairs $(C,Z)$ where $C\subset S$ is a pure curve in class $\beta$ and $Z\subset C$ is a length-$n$ subscheme. This extends to give a set-theoretic isomorphism of moduli spaces
\beq{isom}
P_\chi(S,\beta)\ \cong\ \Hilb^n(\mathcal C/H_\beta).
\eeq
Here $\mathcal C\to H_\beta$ is the universal curve over the Hilbert scheme $H_\beta:=\Hilb_\beta(S)$ of pure curves in class $\beta$, and $\Hilb^n$ is the relative Hilbert scheme of $n$ points on the fibres of $\mathcal C$. In \cite[Appendix B]{PT3} it is shown that \eqref{isom} is an isomorphism of schemes. And 
in \cite[Appendix A]{KT1}, written with D. Panov, it is more-or-less
shown\footnote{What is shown is that the two tangent-obstruction complexes are the same, but it is not checked that the maps to the cotangent complex agree. This is not important for producing a virtual cycle, which only depends on the K-theory class of the obstruction complexes.} that \eqref{isom} is an isomorphism of \emph{schemes with perfect obstruction theory}. Here we have to take the \emph{reduced} obstruction theory on the left hand side, and on the right hand side the obstruction theory arising from a natural description of the relative Hilbert scheme in terms of equations. We give a brief account of this description in 2 steps now; for full details see \cite{KT1}. \medskip

$\bullet$ Pick a divisor $A\subset S$, sufficiently positive that $L(A)$ is very ample with no higher cohomology for all $L\in\Pic_\beta(S)$. Then by adding $A$ to divisors we get an embedding of $\Hilb_\beta(S)$ into $\Hilb_\gamma(S)$, where $\gamma=[A]+\beta$:
$$
\xymatrix{H_\beta\ \ar@{^(->}[r]^(.47){+A} & H_\gamma.}
$$
Now $H_\gamma$ is smooth (it is a projective bundle over $\Pic_\gamma(S)$) and the image $A+H_\beta$ is the set of divisors $D\in H_\gamma$ which contain $A$, i.e. the divisors $D$ for which
\beq{zeroDA}
s_D|_A=0\,\in H^0(\O(D)|_A),
\eeq
where $s_D\in H^0(\O(D))$ is the equation defining $D$. Varying \eqref{zeroDA} over $H_\gamma$ we get a section of a bundle 
\beq{sF}
s_{\mathcal D}|_{H_\gamma\times A}\ \ \mathrm{of}\ \ F:=\pi_{\gamma*}\big(\O(\mathcal D)|_{H_\gamma\times A}\big)
\eeq
over $H_\gamma$, whose zero locus is precisely $H_\beta$. Here $\mathcal D\subset S\times H_\gamma$ is the universal divisor, and $\pi_S,\,\pi_\gamma$ are the projections from $S\times H_\gamma$ to its two factors. The above pushdown has no higher cohomology due to Condition \eqref{h2=0}. This description of $H_\beta$ in terms of equations endows it with a natural perfect obstruction theory. \medskip

$\bullet$ Secondly, we embed
$$
\xymatrix{\Hilb^n(\mathcal C/H_\beta)\ \ar@{^(->}[r] & S^{[n]}\times H_\beta,}
$$
where $S^{[n]}$ denotes the (smooth) Hilbert scheme of $n$ points on $S$. A point $(Z,C)$ of $S^{[n]}\times H_\beta$ is in the image if and only if $Z\subset C$, if and only if
\beq{zeroZC}
s_C|_Z=0\,\in H^0(\O_Z(C)).
\eeq
Varying \eqref{zeroZC} over $S^{[n]}\times H_\beta$ we get a section of a bundle
\beq{sCn}
s_{\mathcal C}|_{\mathcal Z \times H_\beta}\ \ \mathrm{of}\ \ \O(\mathcal C)^{[n]}:=\pi_*\big(\O(\mathcal C)|_{\mathcal Z \times H_\beta})\big),
\eeq
whose zero locus is precisely $\Hilb^n(\mathcal C/H_\beta)$. Here $\mathcal C\subset S\times H_\beta$ is the universal divisor, $\mathcal Z\subset S\times S^{[n]}$ is the universal length-$n$ subscheme of $S$, and $\pi$ is the projection $S\times S^{[n]}\times H_\beta\to S^{[n]}\times H_\beta$. This description of $\Hilb^n(\mathcal C/H_\beta)$ in terms of equations (relative to the possibly singular space $H_\beta$) endows it with a natural perfect relative obstruction theory over $H_\beta$. \medskip

In \cite[Appendix A]{KT1} we show how to combine these two obstruction theories to endow $\Hilb^n(\mathcal C/H_\beta)$ with a perfect absolute obstruction theory, which we then identify with the reduced obstruction theory of stable pairs.
Noting that over $S\times H_\beta$ the line bundle\footnote{We suppress many pullbacks for readability; here $A$ denotes $\pi_S^*A$.}\, $\O(\mathcal D-A)$ restricts to $\O(\mathcal C)$, we see that the bundle $\O(\mathcal C)^{[n]}$ of \eqref{sCn} extends naturally over $S^{[n]}\times H_\gamma$ as $\O(\mathcal D - A)^{[n]}$. (Its section $s_{\mathcal C}|_{\mathcal Z \times H_\beta}$ does not extend.) As a consequence we get the following.

\begin{theorem} {\cite[Theorem~A.7]{KT1}} \label{main}
Assuming Condition (\ref{h2=0}), the pushforward of the reduced virtual cycle
$$
[P_{1-h+n}(S,\beta)]^{red} \in H_{2v}(P_{1-h+n}(S,\beta))
$$
to the smooth ambient space $S^{[n]} \times H_{\gamma}$ is Poincar\'e dual to
\begin{equation*}
c_{r}(F)\,.\,c_{n}\big(\O(\D - A)^{[n]}\big).
\end{equation*}
\end{theorem}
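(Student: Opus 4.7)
The plan is to identify the reduced virtual cycle on $P_{1-h+n}(S,\beta)$ with a virtual cycle on $\Hilb^n(\mathcal C/H_\beta)$ via \eqref{isom}, then reassemble the two-step zero-locus description \eqref{sF}, \eqref{sCn} as a single zero locus inside the smooth ambient $S^{[n]}\times H_\gamma$, and finally invoke Fulton's formula for the pushforward of a virtual class cut out by a section of a bundle on a smooth space.

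First I would verify the matching of obstruction theories. Under \eqref{isom} the reduced obstruction theory on the stable pairs side should agree, at the K-theory level of obstruction complexes, with the absolute obstruction theory on $\Hilb^n(\mathcal C/H_\beta)$ built by combining the obstruction theory on $H_\beta$ (from the section \eqref{sF} of $F$ on the smooth $H_\gamma$) with the relative obstruction theory over $H_\beta$ (from the section \eqref{sCn} of $\O(\mathcal C)^{[n]}$ on $S^{[n]}\times H_\beta$). This deformation-theoretic identification is the content of Appendix A of \cite{KT1} (written with D. Panov), which I would cite.

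Next I would establish the key bundle identity
\[
\O(\mathcal C)^{[n]} \;=\; \O(\D-A)^{[n]}\big|_{S^{[n]} \times H_\beta},
\]
which follows from $\O(\D - A)|_{S \times H_\beta} = \O(\mathcal C)$ together with compatibility of the tautological construction $E \mapsto E^{[n]} = \pi_*(E \otimes \O_{\mathcal Z})$ with flat base change along $S^{[n]} \times H_\beta \hookrightarrow S^{[n]} \times H_\gamma$. With this in hand, $\Hilb^n(\mathcal C/H_\beta)$ sits inside the smooth $S^{[n]}\times H_\gamma$ as a two-step zero locus: the pulled-back section of $F$ cuts out $S^{[n]}\times H_\beta$, and \eqref{sCn} then cuts out $\Hilb^n(\mathcal C/H_\beta)$, now identified with a section of the restriction of $\O(\D-A)^{[n]}$. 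The combined virtual cotangent complex agrees on K-theory classes with $[(F\oplus\O(\D-A)^{[n]})^*\to\Omega_{S^{[n]}\times H_\gamma}]$, so the virtual cycle is the refined Euler class $0^!_{F\oplus\O(\D-A)^{[n]}}[S^{[n]}\times H_\gamma]$.

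Finally I would push forward to $S^{[n]}\times H_\gamma$ by invoking the standard formula (Fulton, Intersection Theory, Example~6.1.4) that the pushforward to the smooth ambient of the refined Euler class of a bundle $E$ equals $c_{\mathrm{top}}(E)\cap[\text{ambient}]$, yielding
\[
\iota_*[\Hilb^n(\mathcal C/H_\beta)]^{vir} \;=\; c_r(F)\,.\,c_n\bigl(\O(\D-A)^{[n]}\bigr)\cap[S^{[n]}\times H_\gamma],
\]
which is Poincar\'e dual to $c_r(F)\,.\,c_n(\O(\D-A)^{[n]})$ as claimed. The hard part is the first step: matching a deformation-theoretic reduced obstruction theory (coming from $\C^*$-localisation on the Calabi-Yau $X = K_S$ after removing a trivial summand) with an explicit cohomological obstruction theory (coming from zero loci of sections of $F$ and $\O(\mathcal C)^{[n]}$). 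The remaining steps are routine: the bundle identity is bookkeeping, the two-step zero locus assembly is standard, and the pushforward is textbook intersection theory.
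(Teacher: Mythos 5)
Your route is essentially the paper's: in this paper the theorem is quoted from \cite[Theorem A.7]{KT1}, and what the text supplies is exactly the setup you reproduce --- the identification of the reduced stable-pairs obstruction theory with the one coming from the iterated zero-locus description (which you, like the paper, delegate to \cite[Appendix A]{KT1}), plus the observation that $\O(\mathcal C)^{[n]}$ extends over $S^{[n]}\times H_\gamma$ as $\O(\D-A)^{[n]}$. One imprecision in your final step: you cannot literally form a refined Euler class of the direct sum $F\oplus\O(\D-A)^{[n]}$ relative to a section on the ambient space, because the second section $s_{\mathcal C}|_{\mathcal Z\times H_\beta}$ does \emph{not} extend over $S^{[n]}\times H_\gamma$ (the paper stresses precisely this point); only $F$ has a global section there. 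The bookkeeping has to be run in two stages: the localized Euler class of $(F,s_{\mathcal D}|_{H_\gamma\times A})$ gives a class on $S^{[n]}\times H_\beta$ whose pushforward is $c_r(F)$ by the localized top Chern class property in \cite{Ful}; the relative obstruction theory over $H_\beta$ then caps this class with $c_n(\O(\mathcal C)^{[n]})$ --- a compatibility that needs an argument since $S^{[n]}\times H_\beta$ is in general singular, and which is part of what \cite[Appendix A]{KT1} establishes --- and finally the projection formula together with your bundle-extension identity replaces $c_n(\O(\mathcal C)^{[n]})$ by $c_n(\O(\D-A)^{[n]})$ on $S^{[n]}\times H_\gamma$. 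With that adjustment your argument is the intended one and the conclusion stands.
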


\noindent
Here $v = h-1+n + \int_{\beta} c_{1}(S) + h^{0,2}(S)$ is the reduced virtual dimension of $P_{1-h+n}(S,\beta)$ and $r=\chi(L(A))-\chi(L)$ is the rank of the bundle $F$ of \eqref{sF}. Also, given any family $\mathcal L \to S \times B$ of line bundles on $S$, we use the notation $\mathcal L^{[n]}$ for the rank $n$ vector bundle on $S^{[n]} \times B$ defined by pulling $\mathcal L$ back to $\mathcal Z \times B$ and pushing forward to $S^{[n]} \times B$.

\subsection{Virtual normal bundle}

Integrating insertions against the reduced virtual class of Theorem \ref{main} gives numerical invariants of $S$ as in \cite{KT1}. These are part of the full residue invariants of \cite{KT1}, but to get all of them we must include the term\footnote{Writing $N^{vir}$ as a two-term complex $E_0\to E_1$ of equivariant bundles \emph{whose weights are all nonzero} (which is possible, and ensures that the $c_{top}(E_i)$ are invertible), $e(N^{vir})$ is defined to be
$c_{top}(E_0)/c_{top}(E_1)$, where $c_{top}$ is the top $\C^*$-equivariant Chern class.}
$$
\frac1{e(N^{vir})}\in H^*_{\C^*}(P_{1-h+n}(S,\beta))\otimes_{\Z[t]}\Z(t)
\cong H^*(P_{1-h+n}(S,\beta))\otimes_\Z\Z(t),
$$
which arises in the virtual localisation formula of \cite{GP}. Here $N^{vir}$ is the virtual normal bundle of the inclusion $P_{1-h+n}(S,\beta)\subset P_{1-h+n}(X,\iota_*\beta)$, i.e. the dual of the moving part of the reduced 3-fold stable pairs obstruction theory.

Using Serre duality and the fact that $X=K_S$ has trivial canonical bundle $K_X\cong\O_X\otimes\mathfrak t^*$ with $\C^*$-action of weight $-1$, it turns out \cite[Proposition 3.4]{KT1} that $N^{vir}$ is the ordinary (i.e.~not reduced) deformation-obstruction complex $E\udot$ of $P_{1-h+n}(S,\beta)$, shifted by $[-1]$ and twisted by the $\C^*$-representation of weight $1$:
$$
N^{vir}=E\udot[-1]\otimes\mathfrak t=(R\pi_{P*}R\hom(\II\udot_S,\mathbb F))^\vee[-1]\otimes\mathfrak t.
$$
See \cite{KT1} for the meaning of this notation (though we will not need it here). In \cite[Proposition A.3]{KT1} the obstruction theory $E\udot$ was shown to sit in an obvious exact triangle with the  usual obstruction theory $(R\pi_{\beta*}\O_{\mathcal C}(\mathcal C))^\vee$ for $H_\beta$ and the relative obstruction theory
$$
\xymatrix@C=30pt{
\big\{\big(\O(\mathcal C)^{[n]}\big)^* \ \ar[r]^(.60){ds_{\mathcal C}|_{\mathcal Z \times H_\beta}} & \ \Omega_{S^{[n]}}\big\}}
$$
of $P_{1-h+n}(S,\beta)\big/H_\beta$ arising from the description \eqref{sCn}. We have again suppressed some pullback maps, and used $\pi_\beta$ to denote the projection $S\times H_\beta\to H_\beta$. Combining these facts shows that at the level of K-theory,
$$
[N^{vir}]=\big[\big(\O(\mathcal C)^{[n]}\big)^*-\Omega_{S^{[n]}}-(R\pi_{\beta*}\O_{\mathcal C}(\mathcal C))^\vee\big]\otimes\mathfrak t.
$$
This can be expressed as either
\beq{K1}
[N^{vir}]=\big[\big(\O(\mathcal C)^{[n]}\big)^*-\Omega_{S^{[n]}}-(R\pi_{\beta*}
\O(\mathcal C))^\vee+R\Gamma(\O_S)^\vee\otimes\O\big]\otimes\mathfrak t,
\eeq
or, using the exact sequence $0\to\O_{\mathcal C}(\mathcal C)\to\O_{\mathcal D}(\mathcal D)\to\O_A(\mathcal D)\to0$
(and the fact that $\mathcal D-A$ restricts to $\mathcal
C$ on $H_\beta\subset H_\gamma$),
\beq{K2}
\ [N^{vir}]=\big[\big(\O(\mathcal D-A)^{[n]}\big)^*-\Omega_{S^{[n]}}-
(\pi_{\gamma*}\O(\mathcal D))^\vee+R\Gamma(\O_S)^\vee\otimes\O+F^*\big]\otimes\mathfrak t.
\eeq
Recall that $F$ is the bundle \eqref{sF} and $\pi_\gamma$ is the projection $S\times H_\gamma\to H_\gamma$. In the form \eqref{K2} it is clear that $[N^{vir}]$ is the restriction of a class on the ambient space $S^{[n]}\times H_\gamma$, which will make calculation of the general invariants possible: see Section \ref{without}. To compute invariants of a single linear system we will find it convenient to use the form \eqref{K1} -- see Section \ref{H1} -- though of course we could also have used \eqref{K2}.

\section{Calculation with $H_1$-insertions} \label{H1}

In this Section we compute the reduced residue stable pair invariants
$$
\cP_{\!\chi,\beta}^{red}(S,[\gamma_{1}] \ldots [\gamma_{b_{1}(S)}][pt]^{m})
=\int_{[P_\chi(S,\beta)]^{red}\ }\frac1{e(N^{vir})}
\left(\prod_{i=1}^{b_1(S)}\tau(\gamma_i)\right)\!\tau([pt])^m
$$
lying in $\Z(t)$. Recall that $\chi=1-h+n$; otherwise we use the notation of \cite[Section 3.2]{KT1}. The $\gamma_i$ form an integral oriented basis of $H_1(S)$/torsion; their insertion cuts $\Hilb_\beta(S)$ down to a single linear system $|L|$. The $m$ point insertions further cut this down to a codimension-$m$ linear subsystem. In fact by \cite[Section 4]{KT1}, particularly Equations (52, 54), the above equals
\beq{simp}
\int_{j^![P_\chi(S,\beta)]^{red}}\ \frac{h^m}{e(N^{vir})}\,,
\eeq
where $j^!$ is the refined Gysin map \cite[Section 6.2]{Ful} for the Cartesian diagram
$$
\xymatrix@=20pt{P_\chi(S,|L|)\ \ar@{^(->}[r]\ar[d] & \,P_\chi(S,\beta) \ar[d] \\
\{L\}\ \ar@{^(->}[r]^(.43)j & \,\Pic_\beta(S),}
$$
and $h$ is the pullback of the hyperplane cohomology class from $|L|$ to $P_\chi(S,|L|) \break \cong \Hilb^n(\mathcal C/|L|)$. Factor this through the diagram
$$
\xymatrix@C=20pt@R=16pt{P_\chi(S,|L|)\ \ar@{^(->}[r]\ar@{^(->}[d]^{\iota\_L} & \,P_\chi(S,\beta) \ar@{^(->}[d]^\iota \\
S^{[n]}\times|L(A)|\ \ar@{^(->}[r]^(.47)\jmath\ar[d] & \,S^{[n]}\times\Hilb_\gamma(S) \ar[d] \\
\{L(A)\}\ \ar@{^(->}[r]\ar@{=}[d] & \,\Pic_\gamma(S) \ar@{=}[d] \\
\{L\}\ \ar@{^(->}[r]^(.48)j & \,\Pic_\beta(S).\!\!}
$$
The central vertical arrows are flat, so $j^!=\jmath^!$. Thus by \cite[Theorem 6.2]{Ful} we have
$$
\iota_{L*}j^![P_\chi(S,\beta)]^{red}= j^! \iota_* [P_\chi(S,\beta)]^{red}
=\jmath^*\iota_*[P_\chi(S,\beta)]^{red}.
$$
Therefore by Theorem \ref{main}, \eqref{simp} becomes
$$
\int_{S^{[n]}\times|L(A)|}
c_r(F)\,.\,c_n(\O(\mathcal D-A)^{[n]})\,\frac{h^m}{e(N^{vir})}\,,
$$
where as usual we have suppressed the pullback maps $\jmath^*$ on the bundles $F$ and $\O(\mathcal D-A)^{[n]}$.
Over $S\times|L(A)|$, the line bundle $\O(\mathcal D)$ is isomorphic to $L(A)\boxtimes\O(1)$ as both have a section cutting out $\mathcal D$. Hence
$$
F|_{S^{[n]}\times|L(A)|}\cong H^0(L(A)|_A)\otimes\O(1)\cong\O(1)^{\oplus r},
$$
where $r=\chi(L(A))-\chi(L)$, and our integral becomes
\beq{sofar}
\int_{S^{[n]}\times|L(A)|}h^rc_n(L^{[n]}(1))\,\frac{h^m}{e(N^{vir})}
\ =\ \int_{S^{[n]}\times\PP^{\,\chi(L)-1-m}}\frac{c_n(L^{[n]}(1))}{e(N^{vir})}\,.
\eeq

We use the following notation. For any bundle $E$ and variable $x$, set $c_{x}(E):= 1+c_{1}(E)x+c_{2}(E)x^{2}+\ldots$\ . Thus if $E$ has rank $r$ then 
\beq{chernt}
e(E \otimes \mathfrak{t}) = \sum_{i=0}^{r} t^{i} c_{r-i}(E) = t^{r} \sum_{i=0}^{r} (-1/t)^{r-i}(-1)^{r-i} c_{r-i}(E) = t^{r} c_{-1/t}(E^{*}),
\eeq
where $t:=c_1(\mathfrak t)$ is the equivariant parameter: the generator of $H^*(B\C^*)$. Use this to substitute the expression \eqref{K1} for $[N^{vir}]$ into \eqref{sofar}. Since $R\pi_{\beta*}\O(\mathcal C)=R\Gamma(L)\otimes\O(1)$, we get
$$
t^{2n+\chi(L)-n-\chi(\O_S)}\int_{S^{[n]}\times
\PP^{\,\chi(L)-1-m}\ }c_n(L^{[n]}(1))\,\frac{c_{-1/t}\big(T_{S^{[n]}}\big)c_{-1/t}\big(\O(1)^{\oplus\chi(L)}\big)}
{c_{-1/t}\big(L^{[n]}(1)\big)}\,.
$$
Since only the degree $n+\chi(L)-1-m$ part of the quotient contributes to the integral we get
$$
t^{n+\chi(L)-\chi(\O_S)}\!\left(\!-\frac1t\right)^{\!n+\chi(L)-1-m}\!\!\!
\int_{S^{[n]}\times\PP^{\,\chi(L)-1-m}}c_n(L^{[n]}(1))\,\frac{c_\bull\big(T_{S^{[n]}}\big)
c_\bull\big(\O(1)^{\oplus\chi(L)}\big)}{c_\bull\big(L^{[n]}(1)\big)}\,,
$$
where $c_\bull$ denotes the total Chern class.
Integrating over $\PP^{\,\chi(L)-1-m}$ leaves
\begin{equation*} 
(-1)^{\chi(L)-1-m+n}t^{m+1-\chi(\O_{S})}\!\!\int_{S^{[n]}}\left[\frac{c_{\bull}
(T_{S^{[n]}})(1+h)^{\chi(L)} \sum_{i=0}^{n} h^{i} c_{n-i}(L^{[n]})}{\sum_{i=0}^{n} (1+h)^{i} c_{n-i}(L^{[n]})}\right]_{h^{\chi(L)-1-m}}
\end{equation*}
where the suffix means we take the coefficient of $h^{\chi(L)-1-m}$ in the bracketed expression.

The right hand side is a tautological integral over $S^{[n]}$, involving only Chern classes of $L^{[n]}$ and the tangent bundle.
Applying the recursion of [EGL] $n$ times, it becomes an integral over $S^n$ of a polynomial in $c_1(L),\,c_1(S),\,c_2(S)$ (pulled back from different $S$ factors) and $\Delta_*1,\,\Delta_*c_1(S),\,\Delta_*c_1(S)^2,\,\Delta_*c_2(S)$ (pulled back from different $S\times S$ factors),
where $\Delta\colon S\into S\times S$ is the diagonal. The result is a degree $n$ universal polynomial in $c_{1}(L)^{2}$, $c_{1}(L).c_{1}(S)$, $c_{1}(S)^{2}$ and $c_{2}(S)$ (see also \cite[Section~4]{KST}). This proves Theorem \ref{1}.

\section{Calculation without $H_1$-insertions} \label{without}

Now we turn to the calculation of the reduced residue stable pair invariants
\beq{aim}
\cP_{\!\chi,\beta}^{red}(S,[pt]^{m})\ =\ \int_{[P_\chi(S,\beta)]^{red}\ }
\frac1{e(N^{vir})}\tau([pt])^m\ \in\ \Z(t).
\eeq
When $b_1(S)>0$ this differs from the invariant calculated in Section \ref{H1} as we have to integrate also over $\Pic(S)$.

Picking a Poincar\'e bundle $\Pg$ over $S\times\Pic_\gamma(S)$ expresses $\Hilb_\gamma(S)$ as a projective bundle over $\Pic_\gamma(S)$:
\beq{projb}
\Hilb_\gamma(S)=\PP(p_*\Pg)\Rt{AJ}\Pic_\gamma(S),
\eeq
where $p$ is the projection $S\times\Pic_\gamma(S)\to\Pic_\gamma(S)$ and $AJ$ is the Abel-Jacobi map. Fix a point $x\in S$. Then the locus of curves in $\Hilb_\gamma(S)$ passing through $x$,
$$
D_x:=\PP\big(p_*(\Pg\otimes\I_{\{x\}\times\Pic_\gamma(S)})\big)\subset\Hilb_\gamma(S),
$$
is a divisor since it defines a hyperplane in each projective space fibre (by the very ampleness of the class $\gamma$).

Of course $\Pg$ is only unique up to tensoring by line bundles pulled back from $\Pic_\gamma(S)$. By choosing that line bundle to be
$\Pg^{-1}|_{\{x\}\times\Pic_\gamma(S)}$ if necessary, we may assume without loss of generality that $\Pg$ is trivial at $x$:
\beq{normal}
\Pg|_{\{x\}\times\Pic_\gamma(S)}\cong\O_{\Pic_\gamma(S)}.
\eeq

\begin{lemma} Under the normalisation \eqref{normal}, the hyperplane line bundle $\O(1)$ of the projective bundle \eqref{projb} is $\O(D_x)$.
\end{lemma}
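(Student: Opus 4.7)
The plan is to produce an explicit section of $\O(1)$ on $\Hilb_\gamma(S)=\PP(p_*\Pg)$ whose scheme-theoretic zero locus is $D_x$; this will force $\O(D_x)\cong\O(1)$.

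First, I would fix the convention in which a point of $\PP(p_*\Pg)$ over $[L']\in\Pic_\gamma(S)$ corresponds to a line $\ell\subset H^0(L')=(p_*\Pg)_{[L']}$ --- equivalently a section up to scale, equivalently a divisor in $|L'|$. This is the only convention compatible with the identification $\Hilb_\gamma(S)=\PP(p_*\Pg)$ via the Abel--Jacobi map, and it comes equipped with a tautological inclusion of line bundles $\O(-1)\hookrightarrow AJ^*(p_*\Pg)$ on $\Hilb_\gamma(S)$.

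Next, evaluation at the chosen point $x\in S$ gives a morphism of sheaves on $\Pic_\gamma(S)$
\[
\ev_x\colon p_*\Pg \;\longrightarrow\; \Pg|_{\{x\}\times\Pic_\gamma(S)},
\]
and the normalisation \eqref{normal} identifies the target canonically with $\O_{\Pic_\gamma(S)}$. Pulling back via $AJ$ and precomposing with the tautological inclusion produces a map $\O(-1)\to\O_{\Hilb_\gamma(S)}$, i.e.\ a global section $\sigma\in H^0(\Hilb_\gamma(S),\O(1))$.

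I would then identify the zero scheme of $\sigma$ with $D_x$. At a closed point $([L'],[s])$ the map $\sigma$ reads $\lambda s\mapsto \lambda\cdot s(x)$, using the trivialisation of $L'_x$ provided by \eqref{normal}, so $\sigma$ vanishes precisely when $s(x)=0$ --- exactly on $D_x$. Restricted to any projective-space fibre $|L'|=\PP(H^0(L'))$, $\sigma$ is by construction the linear form ``evaluation at $x$'' on $H^0(L')$, which cuts out the hyperplane $D_x\cap|L'|$ with multiplicity one. Doing this fibrewise shows that $V(\sigma)$ is a reduced Cartier divisor equal to $D_x$, giving the desired isomorphism $\O(D_x)\cong\O(1)$.

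The only delicate point --- and the main potential obstacle --- is bookkeeping the $\PP(\cdot)$ convention and checking that \eqref{normal} genuinely makes $\ev_x$ land in the trivial bundle $\O_{\Pic_\gamma(S)}$ rather than in some nontrivial line bundle (which would shift the answer by $AJ^*$ of that bundle). Once that is fixed, the remainder is a direct fibrewise computation.
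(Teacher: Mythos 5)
Your argument is exactly the paper's: compose the tautological inclusion $\O(-1)\into AJ^*p_*\Pg$ with evaluation at $x$, use the normalisation \eqref{normal} to land in the trivial bundle, and identify the zero locus of the resulting section of $\O(1)$ with $D_x$. The extra fibrewise multiplicity check you include is a sound elaboration of the same proof, so the proposal is correct and essentially identical in approach.
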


\begin{proof}
Using the normalisation \eqref{normal},
the tautological bundle $\O(-1)\into p_*\Pg$ of $\PP(p_*\Pg)$ has a canonical map to $\O$ given by evaluation of sections at $x\in S$. Its zero locus is precisely $D_x$.
\end{proof}

\begin{corollary}
The insertion $\tau([pt])$ is the cohomology class $h:=c_1(\O(1))$ pulled back to $P_\chi(S,\beta)$ via $P_\chi(S,\beta)\subset S^{[n]}\times H_\gamma\to H_\gamma$.
\end{corollary}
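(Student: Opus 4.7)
The plan is to identify both $\tau([pt])$ and the pullback of $h$ to $P_\chi(S,\beta)$ with the Poincar\'e dual of the divisor
\[
\Delta_x \;:=\; \{(C,Z)\in P_\chi(S,\beta) : x\in C\}
\]
for a fixed generic point $x\in S$.

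First, I would show that $\tau([pt]) = [\Delta_x]$ by unwinding the definition in \cite[Section 3.2]{KT1}, which writes $\tau(\alpha) = \pi_{P*}\bigl(\pi_S^*\alpha \cdot \operatorname{ch}_\bull(\mathbb F)\bigr)$ for $\alpha \in H^*(S)$, with $\mathbb F$ the universal sheaf on $S\times P_\chi(S,\beta)$. For $\alpha=[pt]$ represented by $x$, the pushforward reduces to restriction to $\{x\}\times P_\chi(S,\beta)$. Via the isomorphism \eqref{isom} the universal sheaf fits into an exact sequence $0\to \mathcal O_{\mathcal C_{\rm univ}} \to \mathbb F \to Q \to 0$ with $Q$ supported in fibre-codimension at least $2$, so in the relevant Chern-character degree only $\mathcal O_{\mathcal C_{\rm univ}}$ contributes. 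Intersection with $\{x\}\times P_\chi(S,\beta)$ then picks out exactly $\Delta_x$.

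Next I would chase $\Delta_x$ through the chain of inclusions $P_\chi(S,\beta)\hookrightarrow S^{[n]}\times H_\beta \stackrel{+A}{\hookrightarrow} S^{[n]}\times H_\gamma$. Since $A$ is fixed we may choose $x\notin A$, and then $x\in C$ is equivalent to $x\in C+A$, i.e.\ to $(Z,C+A)\in S^{[n]}\times D_x$. Hence $\Delta_x$ is set-theoretically the preimage of $D_x$ under the composition $P_\chi(S,\beta)\hookrightarrow S^{[n]}\times H_\gamma \to H_\gamma$; for generic $x$ this preimage is a reduced Cartier divisor, giving equality of cohomology classes. Combining with the preceding Lemma ($[D_x] = c_1(\mathcal O(1)) = h$) yields $\tau([pt])$ equals the pullback of $h$, as claimed.

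The main obstacle is the first paragraph: justifying that the Chern-character definition of $\tau([pt])$ really matches the naive geometric divisor $\Delta_x$, and in particular handling places where $P_\chi(S,\beta)$ may be non-reduced. This is standard but requires unwinding the conventions of \cite[Section 3.2]{KT1}; the remaining steps are essentially formal consequences of the geometric setup preceding the Lemma and need no further calculation.
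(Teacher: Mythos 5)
There is a genuine gap, and it sits exactly where you locate it but is more serious than ``standard unwinding''. Your strategy is to prove the identity \emph{on} $P_\chi(S,\beta)$ by equating both sides with ``the Poincar\'e dual of $\Delta_x$''. But $P_\chi(S,\beta)\cong\Hilb^n(\mathcal C/H_\beta)$ can be singular, non-reduced and of impure dimension, so a codimension-one closed subscheme $\Delta_x$ has no canonical class in $H^2(P_\chi(S,\beta))$: there is no Poincar\'e duality to invoke. Likewise the claim that ``for generic $x$ this preimage is a reduced Cartier divisor, giving equality of cohomology classes'' is not available: varying $x$ moves the divisor $D_x\subset H_\gamma$ but does nothing to improve the singularities of $P_\chi(S,\beta)$ itself, so the preimage can be non-reduced for \emph{every} $x$ (e.g.\ along an everywhere non-reduced component of $H_\beta$); and the analogous transversality is equally unjustified in your first step, where ``intersection with $\{x\}\times P_\chi(S,\beta)$ picks out exactly $\Delta_x$'' again needs a multiplicity-one statement on the moduli space that you cannot get. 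So the intermediate object $[\Delta_x]$ is both ill-defined and unnecessary.

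The repair is to never leave the level of pulled-back classes: as in your first paragraph, $\tau([pt])=\pi_{P*}(\pi_S^*[pt]\cdot c_1(\mathbb F))$ and $c_1(\mathbb F)$ is the pullback of $c_1(\O(\mathcal C))$ from $S\times H_\beta$, hence of $c_1(\O(\mathcal D-A))$ from $S\times H_\gamma$; therefore $\tau([pt])$ is the pullback to $P_\chi(S,\beta)$ of a class on the \emph{smooth} space $H_\gamma$, and the identification with $h$ should be carried out there. This is what the paper does: the $A$-term drops out for degree reasons (or take $x\notin A$), and on the smooth $H_\gamma\times S$ one may legitimately use Poincar\'e duality and the transverse intersection $\mathcal D\cap(H_\gamma\times\{x\})=D_x$, or more directly the normalisation $\Pg|_{\{x\}\times\Pic_\gamma(S)}\cong\O$ together with $\O(\mathcal D)\cong\Pg(1)$, which gives $\O(\mathcal D)|_{\{x\}\times H_\gamma}\cong\O(1)$ and hence, via the Lemma, the class $h$; pulling back to $P_\chi(S,\beta)$ finishes the proof. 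With that replacement your outline becomes essentially the paper's argument; as written, the passage through $\Delta_x$ does not go through.
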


\begin{proof}
Recall \cite[Section 3.2]{KT1} that we use $[\ \cdot\ ]$ to denote Poincar\'e duals, and that $\tau([pt])\in H^*(P_\chi(S,\beta))$ is defined by the top half of the diagram
$$
\xymatrix{S \\ P_\chi(S,\beta)\times S \ar[u]^{\pi_S}\ar[d]\ar[r]^(.55){\pi_P} & P_\chi(S,\beta) \ar[d] \\
H_\beta\times S \ar[r]^(.55){\pi_\beta} & H_\beta.\!\!}
$$
Namely $\tau([pt])=\pi_{P*}(\pi_S^*[pt]\cdot c_1(\mathbb F))$ on $P_\chi(S,\beta)$, where $\mathbb F$ is the universal sheaf over $S\times P_\chi(S,\beta)$. But $c_1(\mathbb F)$ is the pullback of $c_1(\O(\mathcal C))$ from $H_\beta\times S$. So by going round the above Cartesian square, we find that $\tau([pt])$ is the pullback from $H_\beta$ of $\pi_{\beta*}(c_1(\O(\mathcal C))\boxtimes[pt])$.

Now the terms in the above square embed (via the obvious commuting maps) in the terms in the square
$$
\xymatrix{S^{[n]}\times H_\gamma\times S \ar[d]\ar[r] & S^{[n]}\times H_\gamma \ar[d] \\
H_\gamma\times S \ar[r]^(.55){\pi_\gamma} & H_\gamma.\!}
$$
Our class $\pi_{\beta*}(c_1(\O(\mathcal C))\boxtimes[pt])$ is the restriction to $H_\beta$ of $\pi_{\gamma*}(c_1(\O(\mathcal D-A))\boxtimes[pt])$. Since the class $A$ is pulled back from $S$ it contributes nothing for degree reasons. And since $H_\gamma\times S$ is smooth we can use Poincar\'e duality to write the rest as
the pushdown via $\pi_\gamma$ of the homology class of $\mathcal D$ intersected with that of $H_\gamma\times\{x\}$. This intersection is $D_x$ and is transverse. By the Lemma we therefore get $h\in H^2(H_\gamma)$. Pulling up to $S^{[n]}\times H_\gamma$ and restricting to $P_\chi(S,\beta)$ in the previous square gives the result.
\end{proof}

Substituting this result and the expression \eqref{K2} for $e(N^{vir})$ into \eqref{aim} gives the expression
$$
t^{2n+\chi(L(A))-n-\chi(\O_S)-r}\hspace{-5mm}\mathop{\int}_{S^{[n]}\times H_\gamma}\!\!\!\!\!
c_r(F)c_n\big(\O(\mathcal D-A)^{[n]}\big)h^m
\frac{c_{-1/t}(T_{S^{[n]}})c_{-1/t}\big(\pi_{\gamma*}\O(\mathcal D))\big)}
{c_{-1/t}\big(\O(\mathcal D-A)^{[n]}\big)c_{-1/t}(F)}\,,
$$
by using the identity \eqref{chernt}. Recall that $r:=\rk(F)=\chi(L(A))-\chi(L)$. The piece of the quotient in the correct degree to contribute is
$$
\left(-\frac1t\right)^{n+\chi(L)-1+h^{0,1}(S)-m}\frac{c_\bull(T_{S^{[n]}})c_\bull
\big(\pi_{\gamma*}\O(\mathcal D))\big)}
{c_\bull\big(\O(\mathcal D-A)^{[n]}\big)c_\bull(F)}\,.
$$
Thus we are left with the product of $(-1)^{\chi(L)-1+n-m+h^{0,1}(S)}t^{m-h^{0,2}(S)}$ and
\beq{padd}
\int_{S^{[n]}\times H_\gamma}c_r(F)c_n\big(\O(\mathcal D-A)^{[n]}\big)\,h^m\,
\frac{c_\bull(T_{S^{[n]}})c_\bull\big(\pi_{\gamma*}\O(\mathcal D))\big)}
{c_\bull\big(\O(\mathcal D-A)^{[n]}\big)c_\bull(F)}\,.
\eeq
This takes care of the sign and power of $t$ in Theorem \ref{2}. We will now concentrate on the integral \eqref{padd}, first pushing it down the projective bundle \eqref{projb} to $S^{[n]}\times\Pic(S)$, then to $\Pic(S)$, then finally to a point. 

\addtocontents{toc}{\SkipTocEntry}
\subsection*{Integrating over the fibres of the Abel-Jacobi map}
Since the line bundle $\hom(\O(-1),\Pg)$ has a canonical section cutting out $\mathcal D$, we have the identity
$$
\Pg(1)\cong\O(\mathcal D).
$$
Substituting into \eqref{padd} yields
$$
\int_{S^{[n]}\times H_\gamma}c_r(F)c_n\big(\Pg(-A)^{[n]}(1)\big)\,h^m\,
\frac{c_\bull(T_{S^{[n]}})c_\bull\big(\big[Rp_*\Pg(-A)\big](1)\big)}
{c_\bull\big(\Pg(-A)^{[n]}(1)\big)}\,.
$$
Expand the integrand in powers of $h=c_1(\O(1))$. Notice that everything is now pulled back from $S^{[n]}\times\Pic_\gamma(S)$ except $c_r(F)$ and the powers of $h$. These are dealt with by the following Lemma for pushing down the projective bundle \eqref{projb}. We use the following diagram
\beq{mapss}
\xymatrix@R=20pt{
S\times H_\gamma \ \ar[r]^(.4){1_{S} \times AJ}\ar[d]_{\pi_\gamma} & \ S\times\Pic_\gamma(S) \ar[d]^p \\ H_\gamma \ar^(.4){AJ}[r] & \Pic_\gamma(S),\!}
\eeq
Recall that the Segre classes $s_i\in H^{2i}$ are defined by $s_\bull=1/c_\bull$\,.

\begin{lemma}
The pushdown $AJ_*(c_r(F)h^j)$ to $\Pic_\gamma(S)$ is equal to the Segre class $s_{j-\chi(L)+1}\big(Rp_*\Pg(-A)\big)$.
\end{lemma}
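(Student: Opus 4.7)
My plan is to push both sides down to $\Pic_\gamma(S)$ via the projective bundle structure $H_\gamma = \PP(E)$ of \eqref{projb}, where $E := p_*\Pg$ has rank $N := \chi(L(A))$. First I would identify $F$ explicitly: from $\Pg(1)\cong\O(\mathcal D)$ together with flat base change along the Cartesian square of \eqref{mapss} restricted to $A$, and the projection formula, one gets
$$
F \;\cong\; AJ^*G \otimes \O(1), \qquad G \;:=\; p_*\big(\Pg|_{A\times\Pic_\gamma(S)}\big),
$$
with $G$ a bundle of rank $r=\chi(L(A))-\chi(L)$ on $\Pic_\gamma(S)$ (higher direct images vanish by the positivity of $A$).

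The splitting principle applied to $G$ then yields $c_r(F)=\sum_{k=0}^r h^k\cdot AJ^* c_{r-k}(G)$. Combining this with the projection formula and the standard pushforward $AJ_*(h^{N-1+i})=s_i(E)$ for the projective bundle \eqref{projb} (valid in our convention $\O(-1)\subset AJ^*E$ with Segre classes defined by $s_\bull=1/c_\bull$) gives
$$
AJ_*\!\left(c_r(F)\,h^j\right) \;=\; \sum_{k=0}^r c_{r-k}(G)\cdot s_{k+j-N+1}(E).
$$

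Finally, to match this with $s_{j-\chi(L)+1}(Rp_*\Pg(-A))$, I would push down the short exact sequence $0\to\Pg(-A)\to\Pg\to\Pg|_A\to 0$ on $S\times\Pic_\gamma(S)$ by $p$, producing the K-theoretic identity $[Rp_*\Pg(-A)]=[E]-[G]$. Hence $s_\bull(Rp_*\Pg(-A))=c_\bull(G)\cdot s_\bull(E)$, and its degree $j-\chi(L)+1=j-N+r+1$ part (using $\chi(L)=N-r$) rewrites, under the substitution $a=r-k$, as precisely the sum displayed above. The only subtle point is keeping conventions aligned in the first step --- in particular checking that the normalisation \eqref{normal}, the direction of flat base change, and the identification $\Pg(1)\cong\O(\mathcal D)$ conspire to give $F=AJ^*G\otimes\O(1)$; once this is secure the rest is Chern class bookkeeping.
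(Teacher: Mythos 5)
Your proposal is correct and follows essentially the same route as the paper: identify $F\cong AJ^*p_*\big(\Pg|_{\Pic_\gamma(S)\times A}\big)(1)$, expand $c_r(F)$ in powers of $h$, push down with $AJ_*(h^i)=s_{i-\chi(L(A))+1}(p_*\Pg)$, and recognise the resulting convolution as the Segre class of the K-theory class $[p_*\Pg]-[p_*(\Pg|_{\Pic_\gamma(S)\times A})]=[Rp_*\Pg(-A)]$. The only difference is presentational: you make explicit the short exact sequence $0\to\Pg(-A)\to\Pg\to\Pg|_A\to0$ behind the last step, which the paper leaves implicit.
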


\begin{proof} 
Using $\Pg(1) \cong \O(\mathcal D)$ and diagram (\ref{mapss})
$$
F=\pi_{\gamma*}\big(\O(\mathcal D)|_{H_\gamma\times A}\big)\cong AJ^* p_*\big( \Pg|_{\Pic_\gamma(S) \times A}\big)(1),
$$
and so
$$
c_r(F)= AJ^* \sum_{i=0}^rc_{r-i}\big(p_*\big(\Pg|_{\Pic_\gamma(S)\times A}\big)\big)h^i.
$$
We can push down using the standard identity \cite[Section 3.1]{Ful}
\beq{Segr}
AJ_*(h^i)=s_{i-\chi(L(A))+1}(p_*\Pg).
\eeq
We get
\beqa
AJ_*(c_r(F)h^j) &=& \sum_{i=0}^rc_{r-i}\big(p_*\big(\Pg|_{\Pic_\gamma(S)\times A}\big)\big)s_{i+j-\chi(L(A))+1}(p_*\Pg) \\
&=& \big[c_\bull\big(p_*\big(\Pg|_{\Pic_\gamma(S)\times A}\big)\big)s_\bull(p_*\Pg)
\big]_{r+j-\chi(L(A))+1} \\
&=& s_{j-\chi(L)+1}\big(p_*\Pg-p_*\big(\Pg|_{\Pic_\gamma(S)\times A}\big)\big) \\ &=& s_{j-\chi(L)+1}\big(Rp_*\Pg(-A)\big). 
\eeqa
\vspace{-1.3cm} \[ \qedhere \]
\end{proof}

\noindent\textbf{Remark}. \emph{
Note that by integrating out $c_r(F)$ we are passing from $H_\gamma$ back to (the reduced virtual cycle of) $H_\beta$. In the case where $\beta$ is sufficiently ample that no virtual technology is necessary, we could have worked directly on $\PP(p_{*}\Pb)$ and pushed down $h^j$ with no $c_r(F)$ insertion. As in \eqref{Segr} this would have given $s_{j-\chi(L)+1}(p_*\Pb)$, the same result as in the Lemma.} \\

Thus our integral has become one over $S^{[n]}\times\Pic_\gamma(S)$ of a polynomial $Q$ in the Chern classes of $T_{S^{[n]}},\ Rp_*\Pg(-A)$ and $\Pg(-A)^{[n]}$.
Since $$\Pb:=\Pg(-A)$$ is a Poincar\'e bundle for $\Pic_\beta(S)$, we use $\otimes\O(-A)$ to identify $\Pic_\gamma(S)$ with $\Pic_\beta(S)$ to get an integral
\beq{cal}
\int_{S^{[n]}\times\Pic_\beta(S)}Q\left(c_\bull(T_{S^{[n]}}),c_\bull(Rp_*\Pb),
c_\bull\big(\Pb^{[n]}\big)\right).
\eeq
By this notation we mean that $Q$ is a polynomial in all of the components $c_i$ of $c_\bull$ (rather than just in the total Chern classes themselves). Notice this expression is now manifestly independent of $A$.

\addtocontents{toc}{\SkipTocEntry}
\subsection*{Integrating over the Hilbert scheme of points}
We need a family version of the recursion of \cite{EGL}.

We fix an arbitrary base $B$, a line bundle $\L$ on $S\times B$ a cohomology class of the form
\begin{equation*}
P(c_\bull(\L^{[n]}), c_\bull(T_{S^{[n]}})) \quad\mathrm{on}\ S^{[n]}\times B,
\end{equation*}
for some polynomial $P$. We wish to push it down to $B$. The recursion \cite{EGL} is easily checked to apply (though it was actually written for the case $B=pt$). The pushdown is turned first into one down $S^{[n-1]}\times S\times B$, then $S^{[n-2]}\times S^2\times B$, and so on. The end result is an integral down the fibres of $S^n\times B\to B$ of a polynomial in
\begin{itemize}
\item $c_1(\L),\,c_1(S),\,c_2(S)$ pulled back from different $S\times B$ and $S$ factors,
\item $\Delta_*1,\,\Delta_*c_1(S),\,\Delta_*c_1(S)^2,\,\Delta_*c_2(S)$ pulled back from different $S\times S$ factors,
where $\Delta$ is the diagonal $S\into S\times S$.
\end{itemize}
In turn this integral is easily computed as a polynomial in integrals down $p\colon S\times B\to B$ of products of
$c_1(S),\,c_2(S),\,c_1(\L),\,p^*p_*\big(c_1(\L)^i c_j(S)^k\big)$.
Applied to $B=\Pic_\beta(S)$ and $\L=\Pb$, \eqref{cal} becomes a polynomial in terms
$$
\int_{S\times\Pic_\beta(S)}M\Big(c_1(S),c_2(S),c_\bull(Rp_*\Pb),
c_1(\Pb),p^*p_*\big(c_1(\Pb)^i c_j(S)^k\big)\Big),
$$
where $M$ is any monomial and $j,k=0,1,2$.

\addtocontents{toc}{\SkipTocEntry}
\subsection*{Integrating over the Picard variety}
Next we apply Grothendieck-Rie\-mann-Roch,
$$
ch(Rp_*\Pb)=p_*\big[\exp(c_1(\Pb)) \Td(S)\big],
$$
and the decomposition
$$
c_1(\Pb)\ =\ (\beta,\id,0)\ \in\
H^2(S)\,\oplus\,\big(H^1(S)\otimes H^1(\Pic_\beta(S))\big)\,\oplus\,H^2(\Pic_\beta(S)).
$$
Here we use the canonical identification $H^1(\Pic_\beta(S))\cong H^1(S)^*$ (so that $\id\in\mathrm{End}\,H^1(S)$) and the normalisation \eqref{normal}. The upshot is a polynomial in the integrals
$$
\int_{S\times\Pic_\beta(S)}M\Big(c_1(S),c_2(S),\beta,\id,
p^*p_*\big(\beta^i.\id^j\!.\,c_k(S)^l\big)\Big),
$$
for arbitrary $i,j$ and $k,l=0,1,2$. For degree reasons, pushing down to $S$ we get a polynomial in the terms $\beta^2,\,\beta.c_1(S),\,c_1(S)^2,\,c_2(S)$ and
\beq{terms}
\int_{\Pic_\beta(S)}M\big(p_*(\id^4),p_*(\beta.\id^2),p_*(c_1(S).\id^2)\big).
\eeq
Using the identification $\Lambda^{2}H^{1}(S,\R)^{*} \cong H^{2}(\Pic_{\beta}(S),\R)$ we obtain
\beqa
p_*(\beta.\id^2) &=& -2[\beta], \\
p_*(c_{1}(S).\id^2) &=& -2[c_{1}(S)], \\
p_*(\id^4) &=& 24 [1].
\eeqa
where $[\beta]$, $[c_{1}(S)]$, $[1]$ are the classes defined in \eqref{pfa} in the Introduction. (To get the precise coefficients it is perhaps easiest to express everything in terms of a basis for $H^1(S)$ and its dual basis for $H^1(S)^*\cong H^1(\Pic_\beta(S))$ and then do the calculation.) 

Finally the canonical identification $\Lambda^{b_1(S)}H^1(S,\Z)^*\cong\Z$ given by wedging together an oriented integral basis is the same as the identification given by integrating over $\Pic_\beta(S)$. Therefore the integrals in \eqref{terms} are the numbers
$$
\Lambda^{i} [\beta] \wedge \Lambda^{j} [c_{1}(S)] \wedge \Lambda^{k} [1],\quad 2i+2j+4k = b_{1}(S),
$$
of \eqref{numbs}. This proves Theorem \ref{2}. \bigskip

\noindent\textbf{Remarks}.
The invariants $\Lambda^{i} [\beta] \wedge \Lambda^{j} [c_{1}(S)]  \wedge \Lambda^{k} [1]$ are in general distinct from $\beta^{2}$, $\beta.c_{1}(S)$, $c_{1}(S)^{2}$, $c_{2}(S)$ as can be seen by the following example. Let $S = \Sigma_{g} \times \PP^{1}$, where $\Sigma_{g}$ is a smooth projective curve of genus $g$. Under the identification
$$
H^2(S,\Z)\ \cong\ H^2(\Sigma_g)\,\oplus\,H^2(\PP^1)\ \cong\ \Z\oplus\Z,
$$
we write $\beta=(\beta_1,\beta_2)$. Then 
$$
\beta^{2} = 2\beta_{1} \beta_{2}, \ \beta.c_{1}(S) = 2\beta_{1} + (2-2g)\beta_{2}, \ c_{1}(S)^{2} = 8-8g,  \ c_{2}(S) = 4-4g.
$$
On the other hand, using the usual basis of $a$- and $b$-cycles for $H^1(S)^*\cong H_1(\Sigma_g)$ one computes that $[1]=0$ and
$$
\Lambda^i[\beta]\wedge\Lambda^{g-i}[c_1(S)] = 2^{g-i}g!\,\beta_{2}^i.
$$
\smallskip

For $S$ an abelian surface, however, the invariants $\Lambda^{i} [\beta] \wedge \Lambda^{j} [c_{1}(S)]  \wedge \Lambda^{k} [1]$ can all be expressed in terms of $\beta^{2}$, $\beta.c_{1}(S)$, $c_{1}(S)^{2}$, $c_{2}(S)$ (i.e. just $\beta^2$ since the others vanish). Indeed using the standard basis arising from a homeomorphism to $(S^{1})^{4}$ it is easy to see that $[1]=1$, $[c_1(S)]=0$ and
$$
\Lambda^2[\beta] = \int_{S} \beta^{2}.
$$

\section{Relation to 3-fold invariants} \label{3f}

In this section we discuss some cases in which the reduced residue stable pair invariants of $S$ computed in Theorem \ref{1} and Theorem \ref{2} are equal to the reduced residue stable pair invariants of the 3-fold $X=K_S$. Let $\iota\colon S\into X$ denote the inclusion.

First we give cases where  the fixed point locus $P_\chi(X, \iota_{*}\beta)^{\C^*}$ has no components other than $P_\chi(S,\beta)$. 
\begin{proposition} \label{irreducible}
There is an isomorphism $P_\chi(X,\iota_{*}\beta)^{\C^*} \cong P_\chi(S,\beta)$ if either
\begin{itemize}
\item $\beta$ is irreducible, or
\item $K_S^{-1}$ is nef, $\beta$ is $(2\delta+1)$-very ample
and $\chi \leq 1 - h + \delta$.
\end{itemize}
(As usual $h$ is defined by $2h-2=\beta^2-c_1(S).\beta$. The inequality on $n$ means the stable pairs have $\le\delta$ free points.)
\end{proposition}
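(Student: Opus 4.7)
The plan is to analyze any $\C^*$-fixed stable pair $(F,s)$ on $X=K_S$ via its $\C^*$-weight decomposition and reduce to showing that $F$ is scheme-theoretically supported on $S$. Let $\pi : X \to S$ be the projection; then $\pi_*\O_X = \bigoplus_{k\ge 0} K_S^{-k}$ as a $\C^*$-equivariant $\O_S$-algebra with $K_S^{-k}$ of weight $k$. Any equivariant coherent sheaf $F$ on $X$ decomposes as $\pi_*F = \bigoplus_{k\ge 0} F_k$ with $\O_S$-linear multiplication maps $\phi_k : F_k\otimes K_S^{-1}\to F_{k+1}$; an equivariant section $s:\O_X\to F$ is determined by $s_0 := s(1)\in H^0(F_0)$ via iterated multiplication.

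I would then collect three structural consequences of stability: (i) purity of $F$ forces each $F_k$ to be pure $1$-dimensional on $S$ or zero; (ii) $0$-dimensionality of $\coker s$, combined with the factorization of the weight-$k$ component of $s$ through $s_0$, forces every $F_k$ to be supported on $C_0 := \mathrm{supp}(F_0)$ and makes the nonzero $F_k$ form an initial segment $k=0,\ldots,N$ (since vanishing of an intermediate $F_{k_0}$ factors the higher weight multiplications through zero); (iii) $\sum_k [F_k] = \beta$ in $H_2(S)$ with each $[F_k]$ effective. The desired isomorphism $P_\chi(X,\iota_*\beta)^{\C^*}\cong P_\chi(S,\beta)$ is then equivalent to $N=0$. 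In the first case ($\beta$ irreducible), (iii) combined with irreducibility of $\beta$ as an effective class forces exactly one $[F_k]$ to equal $\beta$ and the rest to vanish; since $s_0\ne 0$ requires $F_0\ne 0$, it must be $F_0$, and the remaining $F_k$ being pure $1$-dimensional of class zero vanish, so $F=\iota_*F_0$ arises from a pair on $S$.

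The second case is the main obstacle. Assume for contradiction that $N\ge 1$; writing $\beta_0 = [C_0]$ for the class of the reduced support of $F_0$ and $r$ for the generic rank of $F$ along $C_0$, we have $\beta = r\beta_0$ with $r\ge 2$. A Riemann--Roch computation on the $\C^*$-invariant thickening $rC_0\subset X$, using $\pi_*\O_{rC_0} = \bigoplus_{k=0}^{r-1} K_S^{-k}|_{C_0}$, yields
\[
\chi(\O_{rC_0}) \;=\; \chi(\O_C) + \tfrac{r(r-1)}{2}\bigl(\beta_0^2 - \beta_0\cdot K_S\bigr),
\]
where $C$ is a genuine curve of class $\beta$, so $\chi(\O_C)=1-h$. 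Stability constrains $F$ to fit inside a $\C^*$-invariant thickening of multiplicity $r$ along $C_0$, so $\chi(F)-\chi(\O_C)$ picks up at least $\tfrac{r(r-1)}{2}(\beta_0^2-\beta_0\cdot K_S)$; using $K_S^{-1}$ nef so $\beta_0\cdot K_S\le 0$ yields a lower bound $n \ge \tfrac{r-1}{2r}\beta^2$ on the number of free points. The hard step will be extracting from $(2\delta+1)$-very ampleness of $\beta$ the self-intersection bound $\beta^2 > 4\delta$ (so that $\tfrac{r-1}{2r}\beta^2 > \delta$ for all $r\ge 2$), together with a short argument ruling out the possibility that non-extremal $\C^*$-invariant thickenings $F$ could have strictly smaller $\chi$ than $\chi(\O_{rC_0})$; this then contradicts $n\le \delta$ and completes the second case.
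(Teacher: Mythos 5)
Your first case is fine and is essentially the paper's argument in different clothing: purity plus irreducibility of $\beta$ forces the fixed pair to be pushed forward from $S$ (you should still quote \cite[Proposition 3.4]{KT1} for the statement that the bijection is a scheme-theoretic isomorphism, since the set-theoretic identification alone does not give the proposition). The problem is the second case, where your reduction to ``$\beta=r\beta_0$ and $F$ sitting inside the uniform thickening $rC_0$'' is not a legitimate description of a general $\C^*$-fixed pair not supported on $S$. Writing $\pi_*F=\bigoplus_k F_k$, each nonzero $F_k$ is (up to finite length) $\O_{D_k}\otimes K_S^{-k}$ for an effective divisor $D_k\subset S$; these divisors are only \emph{contained in} the support of $F_0$, and their components and multiplicities genuinely vary with $k$ (this is the paper's decomposition $\O_{C_{F,i}}=\bigoplus_k\O_{n_{ik}C_i}\otimes K_S^{-k}$ with $n_{i0}\ge\dots\ge n_{ir_i}>0$). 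So the class decomposition is $\beta=\sum_k\beta_k$ with effective, in general non-proportional $\beta_k$, not $\beta=r[C_0]$; for instance $F_0$ can be supported on a reducible curve while $F_1$ lives on a single component, or $F_0=\O_{2C_0}$, $F_1=\O_{C_0}$. The ``non-extremal thickenings'' you defer to a ``short argument'' are therefore the general case, and bounding their Euler characteristics is the entire content of the proof, not a loose end.

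Concretely, two ideas are missing. First, after the weight-by-weight computation of $\chi(\O_{C_F})$ (using $K_S^{-1}$ nef and adjunction) one is left with bounding $\beta^2-\sum_k\beta_k^2$ from below, and since the $\beta_k$ need not be proportional to $\beta$ this requires the Hodge index theorem: $\beta_k^2\le(\beta.\beta_k)^2/\beta^2$, which turns the bound into $\sum_k(\beta.\beta_k)\bigl(\beta.(\beta-\beta_k)\bigr)/\beta^2\ge\frac12\min\bigl(\beta.\beta_k,\beta.(\beta-\beta_k)\bigr)$ summed over at least two terms. Second, $(2\delta+1)$-very ampleness enters not through a bound on $\beta^2$ but through the statement $\beta.D\ge 2\delta+1$ for every irreducible effective divisor $D$ (choose $2\delta+2$ smooth points of $D$ and produce a curve in class $\beta$ through the first $2\delta+1$ but not the last, so it cannot contain $D$); this is applied to the effective classes appearing in the minimum above. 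Your route instead needs $\beta^2>4\delta$, which you do not derive from the hypotheses (the direct consequence of the very-ampleness argument is only $\beta^2\ge 2\delta+1$ when $|L|$ has an irreducible member), and even granted it would only dispose of the uniform-thickening case. As written, the second bullet of the proposition is not proved.
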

\begin{proof}
Let $(F,s)\in P_\chi(X,\iota_{*}\beta)^{\C^*}$ be a $\C^*$-fixed stable pair in class $\beta$ with scheme-theoretic support $C_F$. Then its set-theoretic support $C_F^{red}$ lies in $S$. But $C_F$ has no embedded points (by the purity of $F$ \cite[Lemma 1.6]{PT1}) so if $\beta$ is irreducible then $C_F$ is in fact reduced. Thus $C_F$ and $(F,s)$ are pushed forward from $S$. So $(F,s)\in P_\chi(S,\beta)$ and $P_\chi(S,\beta)\into P_\chi(X,\iota_{*}\beta)^{\C^*}$ is a bijection. That they have the same scheme structure is proved in
\cite[Proposition~3.4]{KT1}. \medskip

Next assume instead that $\beta$ is $(2\delta+1)$-very ample, that $K_S^{-1}$ is nef, and that $F$ is not supported scheme-theoretically on $S$. We will show that $\chi(\O_{C_F})>1-h+\delta$, which implies the Proposition since $\chi(F)\ge\chi(\O_{C_F})$.

Let the irreducible components of $C_F$ be $C_{F,i}$, with underlying reduced varieties $C_i\subset S$. Since the $C_{F,i}$ are $\C^*$-fixed without embedded points, there is a sequence of integers $n_{i0}\ge\ldots\ge n_{ir_{i}}>0$ such that
\beq{grdd}
\O_{C_{F,i}}=\bigoplus_{k=0}^{r_{i}}\O_{n_{ik}C_i}\otimes K_S^{-k}
\eeq
as a graded ring. Here $n_{ik}C_i\subset S$ is the obvious divisor, and we are writing $\O_X$ as $\bigoplus_{k=0}^\infty K_S^{-k}$ by pushing down to $S$; the $\C^*$-action on $K_S$ then induces the obvious grading by $k$.
Since $K_S^{-1}$ is nef, we obtain
\begin{align*}
\chi(\O_{C_{F,i}}) = \sum_{k=0}^{r_{i}}\big(\chi(\O_{n_{ik}C_{i}})
-k n_{ik} C_i.K_S\big) &\ge \sum_{k=0}^{r_{i}} \chi(\O_{n_{ik}C_i}) \\
&= -\frac12\sum_{k=0}^{r_i}\left(n_{ik}^2C_i^2 + n_{ik} C_i.K_S\right).
\end{align*}
In turn,
\begin{equation*}
\chi(\O_{C_{F}}) = \sum_i\chi(\O_{C_{F,i}})-\sum_{i<j}(n_{i0}n_{j0}+n_{i1} n_{j1} + \ldots)C_i.C_j.
\end{equation*}
Combining the two then adding the adjunction formula
$$
2h-2\ =\ \beta^2+K_S.\beta=\Big(\sum_{i,k} n_{ik} C_i\Big)^2+
\sum_{i,k}n_{ik}C_i.K_S
$$
yields
\begin{align} \nonumber
2\big(\chi(\O_{C_{F}})+h-1\big) &\geq \Big(\sum_{i,k}n_{ik}C_i\Big)^{2}-
\sum_{i,k}n_{ik}^2C_i^2-2\sum_{i<j}\sum_kn_{ik}n_{jk}C_i.C_j \\
&=\sum_{k\ne l}\sum_{i,j}n_{ik}n_{jl}C_i.C_j.
\end{align}
Setting $\beta_k=\sum_in_{ik}[C_i]$ to be the class of the $k$th graded piece of \eqref{grdd}, so that $\beta=\sum_k\beta_k$, we write this as
$$
\sum_{k\ne l}\beta_k.\beta_l\ =\ \beta^2-\sum_k\beta_k^2.
$$
By the Hodge index theorem, $a^2\le(L.a)^2/L^2$ for any positive $L\in H^{1,1}(S)$ and arbitrary $a\in H^{1,1}(S)$. (Proof: $a-(L.a)L/L^2$ is orthogonal to $L$ so has square $\le0$.) Applying this to $L=\beta$ and $a=\beta_k$ gives
\begin{align*}
2\big(\chi(\O_{C_{F}})+h-1\big) &\geq \beta^2-\sum_k\frac{(\beta.\beta_k)^2}{\beta^2} \\ &=\sum_k(\beta.\beta_k)\left(1-\frac{(\beta.\beta_k)}{\beta^2}\right)
\\ &=\sum_k\frac{(\beta.\beta_k)(\beta.(\beta-\beta_k))}{\beta^2} \\
&=\sum_k\frac{(\beta.\beta_k)(\beta.(\beta-\beta_k))}{(\beta.\beta_k)+(\beta.(\beta-\beta_k))}
\\ &\ge\sum_k\frac12\min\big(\beta.\beta_k\,,\,\beta.(\beta-\beta_k)\big).
\end{align*}
Both $\beta_k$ and $\beta-\beta_k$ are effective, since $C_F$ is not supported inside $S$. And the sum contains at least $2$ terms. So it is $\ge\beta.D$, for some effective divisor $D$. So it is sufficient to prove that $\beta.D>2\delta$ for effective classes $D$; in turn it is sufficient to prove this for irreducible $D$.

Choose $2\delta+2$ smooth points on $D$. By the definition of $(2\delta+1)$-very ampleness, there is divisor in $S$ in the class of $\beta$ which passes through the first $2\delta+1$ points, but not the last one. Therefore the divisor does not contain $D$, and $\beta.D\ge2\delta+1$, as required.
\end{proof}

The previous proposition is false for arbitrary surfaces. For instance if $K_S=\O_S(C_0)$, then consider $\beta=nC_0$ and let $C$ be the $n$-fold thickening of $C_0$ along the fibres of $K_S$. This is $\C^*$-fixed with $\chi = 1 - h$, but not scheme-theoretically supported on $S$. However one can often make it true again by restricting to small linear subsystems in the space of curves. This follows from the following result proved and used in \cite{KST}.

\begin{proposition} [{\cite[Proposition~2.1]{KST}}] \label{vample}
If $L$ is a $\delta$-very ample line bundle on $S$ then the general $\delta$-dimensional linear system $\PP^\delta\subset|L|$ contains a finite number of $\delta$-nodal curves appearing with multiplicity 1. All other curves in $\PP^{\delta}$ are \emph{reduced} with geometric genus $\overline g>h-\delta$ where $h$ is the arithmetic genus of curves in $|L|$. \hfill$\square$
\end{proposition}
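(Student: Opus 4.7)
The plan is to stratify $|L|$ by singularity type and use $\delta$-very ampleness to show that only the stratum of pure $\delta$-nodal curves has codimension exactly $\delta$, while every other relevant stratum has codimension strictly greater. For a reduced curve $C$ with only isolated singularities, the arithmetic and geometric genera are related by $h - \bar g(C) = \sum_p \delta_p(C)$ with $\delta_p \ge 1$ and equality precisely at nodes, so $\bar g \le h - \delta$ forces $\sum_p \delta_p \ge \delta$, with equality precisely when $C$ has exactly $\delta$ nodes and no other singularities. This reformulates the conclusion as a codimension statement about strata of $|L|$.

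First I would verify that the locally closed stratum $V^\delta_{\mathrm{nod}} \subset |L|$ of curves with exactly $\delta$ nodes and no further singularities is smooth of codimension $\delta$. At a point $[C] \in V^\delta_{\mathrm{nod}}$ with nodes at distinct $p_1, \ldots, p_\delta$, preserving a node at each $p_i$ (allowing the $p_i$ to move) is a $\delta$-dimensional condition on first-order deformations, realised as the imposition of vanishing of the deformation section at the $\delta$ distinct points $p_1,\ldots,p_\delta$ modulo the $2\delta$-parameter family of node motions. The independence of these $\delta$ conditions is exactly $\delta$-very ampleness applied to the reduced length-$\delta$ subscheme $\{p_1,\ldots,p_\delta\}$ (strictly, $(\delta-1)$-very ampleness suffices). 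Next I would show that each other relevant stratum has codimension strictly greater than $\delta$. These fall into two classes: (a) reduced curves with at least one singularity of $\delta$-invariant $\ge 2$ (cusp, tacnode, higher $A_k$, $D_k$, etc.), where the equisingular stratum per such singularity contributes strictly more codimension than a node does per unit of $\delta$-invariant; (b) non-reduced curves, containing a component with multiplicity $\ge 2$, where $\delta$-very ampleness (applied to a length-$(\delta+1)$ subscheme supported on the non-reduced component) forces the locus of curves containing a prescribed multiple component to have codimension exceeding $\delta$ in $|L|$. Combining type (a) singularities with additional nodes to reach total $\delta$-invariant $\ge \delta$ always gives total codimension $>\delta$.

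Granted these codimension bounds, a generic linear subspace $\PP^\delta \subset |L|$ meets $V^\delta_{\mathrm{nod}}$ transversally in a finite reduced set of points (hence with multiplicity $1$), and avoids every other stratum of $\{\bar g \le h-\delta\} \cup \{\text{non-reduced}\}$, proving both assertions of the proposition. The principal difficulty will be the codimension count for each singularity stratum purely under the hypothesis of $\delta$-very ampleness: a naive description of equisingular deformation spaces at higher singularities demands surjectivity of $H^0(L)$ onto fat zero-dimensional subschemes of $S$, but the standard trick of letting the singularities move reduces the required surjectivity condition to reduced length-$(\delta+1)$ subschemes, precisely what the hypothesis provides. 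Enumerating the finitely many non-nodal stratum types that can contribute total $\delta$-invariant $\ge \delta$, and uniformly verifying strict codimension $>\delta$ for each using this reduction, is the main bookkeeping step.
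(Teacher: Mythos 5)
You should first note that the paper contains no proof of this statement to compare against: it is quoted verbatim from \cite{KST} (Proposition 2.1) and marked $\square$, so your sketch has to stand on its own. The nodal half of your plan is fine and standard: first-order deformations preserving (possibly moving) nodes at $p_1,\dots,p_\delta$ are the sections vanishing at those points, so the $\delta$-nodal stratum is smooth of codimension $\delta$ as soon as the reduced length-$\delta$ subscheme imposes independent conditions, strata with $\ge\delta+1$ nodes have codimension $\ge\delta+1$ by the same argument with $\delta+1$ points (this is the one place where your ``reduced length-$(\delta+1)$ subschemes'' really are the right tool), and generic transversality of $\PP^\delta$ gives finiteness and multiplicity one. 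One slip: $\delta_p=1$ does not characterise nodes --- an ordinary cusp also has $\delta_p=1$ --- so your reformulation ``$\bar g=h-\delta$ iff $C$ is $\delta$-nodal'' is false, and curves with $\delta-1$ nodes and one cusp, which the proposition must also exclude, are missed by your stratum list (a) (``some $\delta_p\ge 2$''). This is repairable (the dichotomy is node versus non-node, not $\delta_p=1$ versus $\delta_p\ge2$), but it matters because the cuspidal strata are exactly of the kind your mechanism cannot handle.

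The genuine gap is the codimension $>\delta$ bound for the non-nodal and non-reduced strata, which you defer to ``bookkeeping'' and propose to obtain by letting the singularities move, thereby reducing to surjectivity onto \emph{reduced} length-$(\delta+1)$ subschemes. That reduction is not valid. First, $\delta$-very ampleness is surjectivity of $H^0(L)\to H^0(L|_Z)$ for \emph{all} length-$(\delta+1)$ subschemes $Z$, reduced or not (see the footnote recalling \cite{BS}); restricting to reduced $Z$ discards precisely the punctual subschemes one needs at a singular point. More seriously, the count it produces is too weak: at a fixed point the hypothesis yields at most $\delta+1$ independent conditions, and subtracting the two parameters for moving the point gives codimension $\ge\delta-1$, not $>\delta$. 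Concretely, take $\delta=3$ and the stratum of curves whose only singularity is an ordinary triple point ($\delta_p=3$): the required bound is codimension $\ge4$, but vanishing on a length-$4$ subscheme of the fat point $\O_S/\mathfrak m_p^3$ and letting $p$ move gives only $\ge2$; the honest count needs all $6$ conditions of the fat point, i.e.\ surjectivity onto a subscheme of length $\delta+3$, which $\delta$-very ampleness does not supply. The non-reduced case has the same defect: a single length-$(\delta+1)$ subscheme thickened along the multiple component, moving in at least a one-parameter family on the curve, gives nowhere near codimension $>\delta$ once the dimension of the family of all such subschemes in $\Hilb^{\delta+1}(S)$ is taken into account. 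Indeed already for $\delta=1$ the statement is the Lefschetz-pencil theorem for a very ample system, which is not a formal consequence of imposing conditions at length-$2$ subschemes: such counts are characteristic-independent, while that statement needs characteristic zero. So a correct proof must bring in something beyond the linear algebra of $\delta$-very ampleness at exactly this step --- higher-order jet surjectivity under stronger ampleness, characteristic-zero generic smoothness, or more refined dimension counts with families of punctual subschemes supported on the singularities --- and as written your proposal fails there; this is the content of the argument in \cite{KST} that the present paper chooses to quote rather than reprove.
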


(One can also assume that the curves in $\PP^\delta$ are irreducible if $L$ is $(2\delta+1)$-very ample, by \cite[Proposition 5.1]{KT1}.)

So when we deal with invariants of $X$ for stable pairs with divisor
class\footnote{Given a stable pair $(F,s)$ on $X$ in class $\iota_*\beta$, its pushdown $q_*F$ to $S$ has a divisor class $\div(q_*F)\in\Hilb_\beta(S)$: see \cite[Section 4]{KT1}. This is basically the support with multiplicities. We use insertions to force this class to lie in $\PP^\delta$.} lying in
$\PP^\delta\subset|L|\subset\Hilb_\beta(S)$, any $\C^*$-fixed pure curve has set-theoretic support on $S$ which is a reduced irreducible curve of class $\beta$. This must therefore be its scheme theoretic support too, so we get a bijection between the cut down moduli spaces $P_\chi(S,\PP^\delta)\cong P_\chi(X,\PP^\delta)^{\C^*}$ which is a scheme-theoretic isomorphism by \cite[Proposition 3.4]{KT1}. \medskip

Therefore in the situation of Proposition \ref{irreducible}, or Proposition \ref{vample} with full $H_1$-insertions and $\chi(L)-1-\delta$ point
insertions\footnote{The $H_1$-insertions cut the divisor class down to $|L|$ and then the point insertions further cut down to $\PP^\delta$. We call these insertions, when the conditions of Proposition \ref{vample} hold, the ``G\"ottsche case". There are no virtual cycles involved in this case since the resulting moduli space is smooth of the correct dimension. And in \cite[Section 5]{KT1} we show the resulting invariants contain the Severi degrees counting nodal curves studied by G\"ottsche.}, the reduced residue stable pair invariants of $X$ and $S$ coincide. Thus these invariants of $X$ are purely topological and determined by the universal polynomials of Theorem \ref{1} and Theorem \ref{2}. By the MNOP conjecture, this determines the corresponding reduced GW and DT invariants of $X$, which should therefore also be topological. Note that in the toric case $h^{0,2}(S)=0$, so these are the usual GW/DT invariants.

\bigskip

\noindent {\tt{mkool@math.ubc.ca}} \\
\noindent {\tt{richard.thomas@imperial.ac.uk}} \\


\end{document}